\newtheorem{thm}{Theorem}[section]
\newtheorem{lem}[thm]{Lemma}
\newtheorem{prop}[thm]{Proposition}
\theoremstyle{definition}
\newtheorem{defn}[thm]{Definition}
\theoremstyle{remark}
\newtheorem{rmk}[thm]{Remark}
\numberwithin{equation}{section}
\renewcommand{\labelenumi}{(\roman{enumi})}
\newcommand{\C}{\mathbb C}
\newcommand{\N}{\mathbb N}
\newcommand{\ms}[1]{\mathscr{#1}}
\newcommand{\mc}[1]{\mathcal{#1}}
\newcommand{\frk}[1]{\mathfrak{#1}}
\newcommand{\eu}[1]{\EuScript{#1}}
\begin{document}

\title[Quantum Rotatability]{Quantum rotatability}
\author{Stephen Curran}
\address{Department of Mathematics\\University of California at Berkeley\\Berkeley, CA 94720}
\keywords{Free probability, quantum rotatability, quantum invariance, semicircle law}
\subjclass[2000]{46L54 (46L65, 60G09)}
\email{\href{mailto:curransr@math.berkeley.edu}{curransr@math.berkeley.edu}}
\urladdr{\href{http://www.math.berkeley.edu/~curransr}{\url{http://www.math.berkeley.edu/~curransr}}}
\date{\today}
\begin{abstract}
In \cite{spekos}, K\"{o}stler and Speicher showed that de Finetti's theorem on exchangeable sequences has a free analogue if one replaces exchangeability by the stronger condition of invariance of the joint distribution under quantum permutations.  In this paper we study sequences of noncommutative random variables whose joint distribution is invariant under quantum orthogonal transformations.  We prove a free analogue of Freedman's characterization of conditionally independent Gaussian families, namely, the joint distribution of an infinite sequence of self-adjoint random variables is invariant under quantum orthogonal transformations if and only if the variables form an operator-valued free centered semicircular family with common variance.  Similarly, we show that the joint distribution of an infinite sequence of random variables is invariant under quantum unitary transformations if and only if the variables form an operator-valued free centered circular family with common variance.

We provide an example to show that, as in the classical case, these results fail for finite sequences.  We then give an approximation for how far the distribution of a finite quantum orthogonally invariant sequence is from the distribution of an operator-valued free centered semicircular family with common variance.
\end{abstract}

\maketitle

\section{Introduction}

The study of distributional symmetries has led to many deep structural results in probability.  The most well-known example is de Finetti's theorem on exchangeable sequences.  A sequence $(\xi_i)_{i \in \N}$ of random variables is called \textit{exchangeable} if the joint distribution of $(\xi_i)_{i \in \N}$ is invariant under finite permutations.  De Finetti's theorem states that an infinite exchangeable sequence of random variables is conditionally independent and identically distributed.  Another basic symmetry is \textit{rotatability}, defined as invariance of the joint distribution under orthogonal transformations.  In \cite{freedman}, Freedman showed that any infinite sequence of rotatable, real-valued random variables must form a conditionally independent centered Gaussian family with common variance.  Although these results fail for finite sequences, approximate results may still be obtained (see \cite{pd1}, \cite{pd2}, \cite{pd3}).  For a modern treatment of these and many related results, the reader is referred to the recent text of Kallenberg \cite{kallenberg}.

Exchangeability and rotatability are defined by distributional invariance under group actions.  In the noncommutative setting, group actions are typically replaced by coactions of quantum groups, and it is therefore natural to consider families of noncommutative variables whose joint distribution is invariant under coactions of quantum groups.  In particular, Wang introduced a noncommutative version of the permutation group $S_n$ in \cite{qpermutations}, called the \textit{quantum permutation group} $A_s(n)$, which leads to the condition of \textit{quantum exchangeability} for a sequence of noncommutative random variables.  K\"{o}stler and Speicher introduced this notion in \cite{spekos}, and showed that de Finetti's theorem has a natural free analogue: an infinite sequence of noncommutative random variables is quantum exchangeable if and only if the variables are freely independent and identically distributed with respect to a conditional expectation.  This was further studied in \cite{qexcalg}, where we extended this result to more general sequences and gave an approximation result for finite sequences.

In this paper, we consider sequences of noncommutative random variables whose joint distribution is invariant under quantum orthogonal transformations, in the sense of the \textit{quantum orthogonal group} $A_o(n)$ of Wang \cite{qorthunit}.  Our main result is the following free analogue of Freedman's characterization of conditionally independent Gaussian families:

\begin{thm}\label{infrot}
Let $(x_i)_{i \in \N}$ be a sequence of self-adjoint random variables in the W$^*$-probability space $(M,\varphi)$.  Then the following are equivalent:
\begin{enumerate}
\item The joint distribution of $(x_i)_{i \in \N}$ is invariant under quantum orthogonal transformations.
\item There is a W$^*$-subalgebra $1 \in B \subset M$, and a conditional expectation $E:W^*(\{x_i:i \in \N\}) \to B$ which preserves $\varphi$ such that $\{x_i:i \in \N\}$ form a $B$-valued freely independent centered semicircular family with common variance, with respect to $E$.
\end{enumerate}
\end{thm}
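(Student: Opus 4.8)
The plan is to establish the two implications separately; the real content is in $(1)\Rightarrow(2)$, while $(2)\Rightarrow(1)$ is a direct moment computation. For the latter, suppose $\{x_i\}$ is a $B$-valued free centered semicircular family with common variance $\eta$. By the operator-valued moment--cumulant machinery the moment $E(x_{i_1}\cdots x_{i_k})$ is a sum over non-crossing pair partitions $\pi \in NC_2(k)$ of nested insertions of $\eta$, weighted by $\prod_{\{a,b\}\in\pi}\delta_{i_a i_b}$, with odd moments vanishing by centeredness. Under the coaction $x_i \mapsto \sum_j x_j \otimes u_{ji}$ followed by $E \otimes \mathrm{id}$, each factor $\delta_{i_a i_b}$ gets replaced by $\sum_c u_{c i_a} u_{c i_b}$, which is $\delta_{i_a i_b}\cdot 1$ by orthogonality of the fundamental corepresentation of $A_o(n)$; so the distribution is unchanged. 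Equivalently, one invokes the Weingarten formula of Banica and Collins for integrating products of the $u_{ij}$ against the Haar state, whose combinatorics is governed precisely by $NC_2$.

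For $(1)\Rightarrow(2)$, the first and principal step is to produce $B$ and $E$. I would take $B$ to be the tail von Neumann algebra $\bigcap_{N\ge 1} W^*(x_k : k \ge N)$. Since the magic-unitary relations imply the orthogonality relations, there is a surjection $A_o(n) \to A_s(n)$, so quantum orthogonal invariance implies quantum exchangeability; the free de Finetti theorem of K\"ostler and Speicher \cite{spekos}, in the form developed in \cite{qexcalg}, then furnishes a $\varphi$-preserving conditional expectation $E \colon W^*(x_i:i\in\N) \to B$ with respect to which $\{x_i\}$ is freely independent and identically distributed over $B$. Verifying that $E$ exists and preserves $\varphi$, and extracting amalgamated freeness from quantum exchangeability, is the technical heart of the argument and the main obstacle; everything after it is comparatively soft.

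It remains to identify the common $B$-valued distribution as centered semicircular. Composing the coaction with the quotient $A_o(n) \to C(O_n)$ and evaluating at classical points shows that the joint distribution of $(x_i)$ is invariant under the ordinary orthogonal group $O_n$ for every $n$. Any $g \in O_n$, extended by the identity on the remaining coordinates, induces a $\varphi$-preserving automorphism of $W^*(x_i:i\in\N)$ fixing $B$ pointwise, hence commuting with $E$; consequently the $B$-valued joint distribution of $\bigl((gx)_1,\dots,(gx)_n\bigr)$ agrees with that of $(x_1,\dots,x_n)$. Taking $g = \mathrm{diag}(-1,1,\dots,1)$ gives $E(x_1) = 0$, so the family is centered. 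Taking an orthogonal $g$ whose first row is $(n^{-1/2},\dots,n^{-1/2})$ shows that $n^{-1/2}(x_1+\cdots+x_n)$ has the same $B$-valued distribution as $x_1$, for \emph{every} $n$. Letting $n \to \infty$ and applying the operator-valued free central limit theorem, $x_1$ --- and hence, by identical distribution over $B$, every $x_i$ --- is $B$-valued semicircular with covariance $b \mapsto E(x_1 b x_1)$, which is a common variance; this gives $(2)$.

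The step I expect to fight with is the construction of $E$ together with the passage from scalar-valued quantum invariance to the amalgamated (operator-valued) picture: one must show the tail algebra carries a $\varphi$-preserving conditional expectation and that the quantum symmetry upgrades to this setting so as to yield genuine freeness with amalgamation rather than merely classical conditional independence. By contrast, the final semicircularity/centeredness step uses only the classical orthogonal subgroup and the free CLT, and the $(2)\Rightarrow(1)$ direction is routine once the Weingarten combinatorics of $A_o(n)$ is in hand.
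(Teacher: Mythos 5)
Your proof is correct, but it takes a genuinely different route from the paper. The paper never invokes the free de Finetti theorem or the central limit theorem: it constructs the conditional expectation $E_{\eu{QR}_n}$ by integrating the coaction of $\frk A_o(n)$ against the Haar state, computes $E_{\eu{QR}_n}[b_0x_{j_1}\dotsb x_{j_{2k}}b_{2k}]$ explicitly via the Banica--Collins Weingarten formula, identifies the surviving terms as the cumulants $\kappa^{(\pi)}$ for $\pi\in NC_2(2k)$, $\pi\le\ker\mathbf j$ (Lemma \ref{rotcum}), and lets $n\to\infty$; semicircularity and freeness with amalgamation drop out simultaneously from Proposition \ref{semcircexp}. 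You instead factor the problem: pass through the quotient $A_o(n)\to A_s(n)$ to get quantum exchangeability, cite the K\"ostler--Speicher/\cite{qexcalg} de Finetti theorem for the tail algebra $B$, the $\varphi$-preserving $E$, and freeness with amalgamation, and then use only the \emph{classical} subgroup $O_n\subset A_o(n)$ together with uniqueness of the $\varphi$-preserving conditional expectation (to see that the induced automorphisms fix $B$ and commute with $E$, so scalar invariance upgrades to $B$-valued invariance) and the operator-valued free CLT applied to the $n$-independent distribution of $n^{-1/2}(x_1+\dotsb+x_n)$. This is precisely the free analogue of the classical derivation of Freedman's theorem from de Finetti plus the CLT, and every ingredient you use is available in the literature, so the argument closes. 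What the paper's direct Weingarten computation buys that yours does not is the quantitative finite-$n$ estimate of Theorem \ref{finrot} (the $O(1/n)$ comparison with a semicircular family), which falls out of the same calculation; your soft argument gives no rate and genuinely needs $n=\infty$ at two separate points (de Finetti and the CLT). Conversely, your argument makes transparent \emph{why} the semicircle law appears, and isolates exactly which part of the quantum symmetry is used where: the quantum part only to get amalgamated freeness, the classical part to pin down the distribution.
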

\noindent It is well-known that the semicircular distribution plays the role of the Gaussian distribution in free probability, in particular it is the limit distribution of the free central limit theorem \cite{vdn}.  Note that the free independence is not part of our assumptions, but is instead a result of the invariance condition.  If one assumes \textit{a priori} that the variables are freely independent, then it is known that the variables are centered semicircular with common variance if and only if their joint distribution is invariant under usual orthogonal transformations (\cite{nica}).

As in the classical case, Theorem \ref{infrot} fails for finite sequences (we provide an example in \ref{finex}).  However, we can give the following approximation:

\begin{thm}\label{finrot}
Let $(x_1,\dotsc,x_n)$ be a sequence of self-adjoint random variables in the W$^*$-probability space $(M,\varphi)$ whose joint distribution is invariant under quantum orthogonal transformations.  Then there is a W$^*$-subalgebra $1 \in B \subset M$, and a $\varphi$-preserving conditional expectation $E:W^*(\{x_i:i \in \N\}) \to B$ such that if $s_1,\dotsc,s_n$ is a $B$-valued free centered semicircular family with common variance $\eta:B \to B$ defined by
\begin{equation*}
 \eta(b) = E[x_1bx_1],
\end{equation*}
then for any $k \in \N$, $1 \leq i_1,\dotsc,i_{2k+1} \leq n$ and $b_0,\dotsc,b_{2k+1} \in B$ such that $\|b_l\| \leq 1$ for $1 \leq l \leq 2k$, we have
\begin{equation*}
\bigl\|E[b_0x_{i_1}\dotsb x_{i_{2k}}b_{2k}] - E[b_0s_{i_1}\dotsb s_{i_{2k}}b_{2k}]\bigr\| \leq \frac{D_k}{n}\|x_1\|^{2k}
\end{equation*}
where $D_k$ is a universal constant which depends only on $k$, and
\begin{equation*} 
 E[b_0x_{i_1}\dotsb x_{i_{2k+1}}b_{2k+1}] = E[b_0s_{i_1}\dotsb s_{i_{2k+1}}b_{2k+1}] = 0.
\end{equation*}

\end{thm}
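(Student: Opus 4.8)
The plan is to build $B$ and $E$ from the quantum symmetry exactly as one does in the infinite case, and then to replace the exact freeness of Theorem \ref{infrot} by a quantitative estimate extracted from the Weingarten calculus for $A_o(n)$. As in the proof of Theorem \ref{infrot}, quantum orthogonal invariance of $(x_1,\dotsc,x_n)$ produces a coaction $\alpha$ of $A_o(n)$ on $W^*(x_1,\dotsc,x_n)$ with $\alpha(x_i)=\sum_j x_j\otimes u_{ij}$; one takes $B=\{m:\alpha(m)=m\otimes 1\}$ and $E=(\mathrm{id}\otimes h)\circ\alpha$, where $h$ is the Haar state of $A_o(n)$, which is a $\varphi$-preserving conditional expectation onto $B$. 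Since $\alpha(b)=b\otimes 1$ for $b\in B$, for all $b_0,\dotsc,b_m\in B$ one obtains
\begin{equation*}
E[b_0 x_{i_1}b_1\dotsb x_{i_m}b_m]=\sum_{j_1,\dotsc,j_m=1}^{n} h(u_{i_1 j_1}\dotsb u_{i_m j_m})\, b_0 x_{j_1}b_1\dotsb x_{j_m}b_m .
\end{equation*}
Because $h$ annihilates every product of an odd number of generators of $A_o(n)$ (the set of non-crossing pair partitions of an odd set being empty), and centered semicircular families have the same property, the last displayed identity of the theorem is immediate. Likewise the rank-one value $h(u_{i_1 j_1}u_{i_2 j_2})=\tfrac1n\delta_{i_1 i_2}\delta_{j_1 j_2}$ gives $\eta(b)=E[x_1 b x_1]=\tfrac1n\sum_p x_p b x_p$ and $E[x_i b x_j]=\delta_{ij}\eta(b)$, so the case $k=1$ holds with zero error; it will be convenient below that $\eta$ has this averaged form.

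For the even case I would insert the Banica--Collins integration formula for $A_o(n)$, namely $h(u_{i_1 j_1}\dotsb u_{i_{2k}j_{2k}})=\sum_{\pi,\sigma\in NC_2(2k)}\delta_\pi(\mathbf i)\,\delta_\sigma(\mathbf j)\,W_{2k,n}(\pi,\sigma)$, where $\delta_\pi(\mathbf i)=1$ iff $\pi\leq\ker\mathbf i$ and $W_{2k,n}$ is the inverse of the Gram matrix $G_{2k,n}(\pi,\sigma)=n^{|\pi\vee\sigma|}$. The Temperley--Lieb structure gives $G_{2k,n}$ diagonal entries $n^{k}$ and off-diagonal entries at most $n^{k-1}$, so $n^{-k}G_{2k,n}=I+R_n$ with $\|R_n\|\leq |NC_2(2k)|/n$; hence for $n$ past a threshold depending only on $k$, $W_{2k,n}(\pi,\sigma)=n^{-k}\delta_{\pi\sigma}+\theta_n(\pi,\sigma)$ with $|\theta_n(\pi,\sigma)|\leq C_k\,n^{-k-1}$. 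Substituting and splitting off the diagonal leading part,
\begin{equation*}
E[b_0 x_{i_1}b_1\dotsb x_{i_{2k}}b_{2k}]=\sum_{\substack{\pi\in NC_2(2k)\\ \pi\leq\ker\mathbf i}} n^{-k}\!\!\sum_{\mathbf j:\,\pi\leq\ker\mathbf j}\!\! b_0 x_{j_1}b_1\dotsb x_{j_{2k}}b_{2k}\;+\;\mathrm{Err},
\end{equation*}
and, since each of the at most $|NC_2(2k)|^2 n^{k}$ words appearing in $\mathrm{Err}$ has norm at most $\|x_1\|^{2k}$ (using $\|b_l\|\leq 1$ and that the $x_i$ are identically distributed, so $\|x_i\|=\|x_1\|$), one gets $\|\mathrm{Err}\|\leq D_k' n^{-1}\|x_1\|^{2k}$; the outer factors $b_0,b_{2k}$ are carried through by the $B$-bimodule property of $E$.

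The final step is to recognize the leading term as the semicircular moment. I would prove, by induction on $k$, that for each $\pi\in NC_2(2k)$,
\begin{equation*}
n^{-k}\sum_{\mathbf j:\,\pi\leq\ker\mathbf j} b_0 x_{j_1}b_1\dotsb x_{j_{2k}}b_{2k}=\eta_\pi[b_0,\dotsc,b_{2k}],
\end{equation*}
the right-hand side being the usual nested evaluation of $\eta$ along $\pi$: choosing an interval block $\{p,p+1\}$ of $\pi$, its value runs freely over $\{1,\dotsc,n\}$ and $\tfrac1n\sum_c x_c b_p x_c=\eta(b_p)$, which collapses $b_{p-1}x_{j_p}b_p x_{j_{p+1}}b_{p+1}$ to $b_{p-1}\eta(b_p)b_{p+1}$ and reduces the statement to $\pi$ with that block removed. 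Summing over $\pi\leq\ker\mathbf i$ and invoking the operator-valued moment--cumulant formula for a $B$-valued semicircular family---whose cross-covariances vanish because the $s_i$ are free over $B$ with common variance $\eta$, which is precisely what yields the constraint $\pi\leq\ker\mathbf i$---identifies the leading term with $E[b_0 s_{i_1}b_1\dotsb s_{i_{2k}}b_{2k}]$ \emph{exactly}. Hence the difference asserted in the theorem equals $\mathrm{Err}$, and one finishes by enlarging $D_k$ to absorb the finitely many $n$ below the threshold, where the crude bound $\|E[b_0 x_{i_1}\dotsb x_{i_{2k}}b_{2k}]-E[b_0 s_{i_1}\dotsb s_{i_{2k}}b_{2k}]\|\leq (1+|NC_2(2k)|)\|x_1\|^{2k}$ already suffices. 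The one real obstacle is the uniform-in-$n$ inversion of $G_{2k,n}$ with $O(1/n)$ error; the rest is bookkeeping, with the pleasant feature that the leading Weingarten term reproduces the operator-valued semicircular moments on the nose.
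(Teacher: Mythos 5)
Your proposal is correct and follows essentially the same route as the paper: the same fixed-point conditional expectation $E=(\mathrm{id}\otimes\psi_n)\circ\widetilde\alpha_n$, the same Weingarten expansion of the even moments, the same identification of the diagonal leading term $n^{-k}\sum_{\pi\leq\ker\mathbf i}\dotsb$ with the nested-$\eta$ (i.e.\ semicircular cumulant) expression, and the same $n^kW_{kn}(\pi,\sigma)=\delta_{\pi\sigma}+O(n^{-1})$ estimate to control the error. The only real divergence is that you re-derive that Weingarten asymptotic by a Neumann-series inversion of the Gram matrix (using that off-diagonal entries are at most $n^{k-1}$), where the paper simply cites the expansion of Banica and Collins; both are fine.
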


Wang also introduced a noncommutative version $A_u(n)$ of the unitary group $U_n$ in \cite{qorthunit}.  For quantum unitarily invariant sequences of noncommutative random variables, similar results hold if one replaces the semicircular distribution by the circular distribution, which is the analogue in free probability of the complex Gaussian distribution.  In particular, we will prove the following characterization of operator-valued free circular families:

\begin{thm}\label{infunit}
Let $(x_i)_{i \in \N}$ be an infinite sequence of noncommutative random variables in the W$^*$-probability space $(M,\varphi)$.  Then the following are equivalent:
\begin{enumerate}
\item The joint distribution of $(x_i)_{i \in \N}$ is invariant under quantum unitary transformations.
\item There is a W$^*$-subalgebra $1 \in B \subset M$ and a conditional expectation $E:W^*(\{x_i:i \in \N\}) \to B$ such that $(x_i)_{i \in \N}$ form a $B$-valued free centered circular family with common variance, with respect to $E$.
\end{enumerate}
\end{thm}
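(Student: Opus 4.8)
The plan is to run, for the quantum unitary group $A_u(n)$, the same argument used to prove Theorem \ref{infrot}, with non-crossing pair partitions replaced throughout by \emph{two-coloured} non-crossing pair partitions, each block of which must join a generator $u_{ij}$ to a conjugate $u_{kl}^*$. This colour constraint is exactly what will force the circular rather than the semicircular law to appear.

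For the implication (2) $\Rightarrow$ (1) I would argue as follows. Recall that a $B$-valued free centered circular family $(x_i)$ with common variance is characterized by the operator-valued moment formula
\[
E[b_0 x_{i_1}^{\epsilon_1} b_1 \dotsb x_{i_k}^{\epsilon_k} b_k]=\sum_{\pi} E_{\pi}[b_0,\dots,b_k],
\]
the sum being over non-crossing pairings $\pi$ of $\{1,\dots,k\}$ that join positions carrying equal index $i$ and opposite exponent ($\epsilon=1$ versus $\epsilon=*$), with $E_\pi$ the corresponding nested insertion of $\eta(b)=E[x_1bx_1^*]$ (every other nested $B$-valued moment, in particular $E[x_1bx_1]$, vanishing). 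Applying the coaction determined by $x_i\mapsto\sum_j x_j\otimes u_{ji}$ and the identity on $B$, expanding, and using that the Haar state of $A_u(n)$ integrates a word $u^{\epsilon_1}_{\cdot\cdot}\dotsb u^{\epsilon_k}_{\cdot\cdot}$ against precisely the colour-matching non-crossing pairings (the description of the intertwiner spaces of $A_u(n)$), together with the biunitarity relations $\sum_k u_{ki}u_{kj}^*=\delta_{ij}=\sum_k u_{ik}^*u_{jk}$, one checks that the moment formula is left invariant; hence so is the $\varphi$-distribution. This is the routine direction.

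For (1) $\Rightarrow$ (2): the compatible coactions $\alpha_n$ assemble into a coaction of $A_u(\infty)=\varinjlim A_u(n)$, and, exactly as in the proof of Theorem \ref{infrot}, a noncommutative reversed martingale convergence argument produces the fixed-point algebra $B$ — which one identifies with the tail algebra $\bigcap_N W^*(x_i:i\ge N)$ — and a $\varphi$-preserving conditional expectation $E\colon W^*(\{x_i:i\in\N\})\to B$. The core of the proof is then the evaluation of $E[b_0 x_{i_1}^{\epsilon_1} b_1\dotsb x_{i_k}^{\epsilon_k} b_k]$: quantum invariance permits one to replace each $x_{i_j}$ by $\sum_l x_l\otimes u_{l i_j}$ and integrate the $u$'s against the Haar state of $A_u(n)$ via the Weingarten expansion; letting $n\to\infty$ and invoking the Weingarten estimates for $A_u(n)$ (whose Weingarten matrix is now indexed by two-coloured non-crossing pairings), only the colour-matching non-crossing pairings survive in the limit and their contributions factorize. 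Unwinding the resulting recursion shows that the only non-vanishing $B$-valued free cumulants of $(x_i)$ with respect to $E$ are the second cumulants of circular type, namely $\kappa_2^E(x_i b, x_i^*)=\eta(b)$ with common variance $\eta(b)=E[x_1bx_1^*]$, all others — including $\kappa_2^E(x_i b, x_i)$ and every mixed cumulant — vanishing. By the operator-valued circular moment–cumulant characterization, this is precisely the statement that $(x_i)$ is a $B$-valued free centered circular family with common variance (freeness over $B$ being subsumed in the vanishing of the mixed cumulants).

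I expect the main obstacle to be, as in the semicircular case, making the $n\to\infty$ passage legitimate while the target identity is \emph{exact}: one must bound the $A_u(n)$ Weingarten error terms in operator norm uniformly enough to pass to the limit, which is where boundedness of the $x_i$ in the W$^*$-setting is used, and one must verify that the martingale-limit construction of $B$ and $E$ is compatible with these moment computations. Relative to Theorem \ref{infrot}, the only genuinely new bookkeeping is the two-colouring of the pairings; everything else is a transcription of the orthogonal argument.
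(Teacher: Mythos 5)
Your proposal is correct and follows essentially the same route as the paper: the easy direction via the operator-valued circular moment--cumulant formula and the biunitarity relations of $A_u(n)$, and the converse via the Weingarten expansion over two-coloured non-crossing pair partitions, the asymptotic estimate $n^kW_{\mathbf d n}(\pi,\sigma)=\delta_{\pi\sigma}+O(n^{-1})$, a martingale-type limit of the conditional expectations onto the fixed-point algebras, and identification of the surviving terms with the circular second cumulants $\eta(b)=E[x_1^*bx_1]$, $\theta(b)=E[x_1bx_1^*]$. The only cosmetic differences are that the paper works with the compatible coactions $\beta_n$ directly rather than an inductive limit $A_u(\infty)$, and takes $B$ to be the intersection of the fixed-point algebras without identifying it with the tail algebra.
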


Our approach is similar to that presented in \cite{qexcalg} for quantum exchangeable sequences, and is based on the compact quantum group structure of $A_o(n)$ and $A_u(n)$.  We find that for a sequence of random variables in a W$^*$-probability space whose joint distribution is quantum orthogonally (resp. unitarily) invariant, there is a natural conditional expectation given by integrating a coaction of $A_o(n)$ (resp. $A_u(n)$) with respect to the Haar state.  Using the formula for the Haar states on $A_o(n)$ and $A_u(n)$, computed by Banica and Collins in \cite{bc}, we give an explicit form for this conditional expectation.  The structure which appears in these computations is the operator-valued moment-cumulant formula of Speicher \cite{memoir}.

The paper is organized as follows:  In Section 2 we recall the basic definitions and results from free probability, and introduce the quantum orthogonal and unitary groups.  In Section 3, we define quantum rotatability for finite sequences and prove Theorem \ref{finrot}.  Section 4 contains the proof of Theorem \ref{infrot}, and an example which shows that this result fails for finite sequences.  In Section 5, we consider quantum unitarily invariant sequences and prove Theorem \ref{infunit}.
\section{Preliminaries and Notations}

\noindent\textbf{Notations}.  Given an index set $I$, we denote by $\ms Q_I$ the $*$-algebra of noncommutative polynomials $\ms Q_I = \C\langle t_i,t_i^*: i \in I\rangle$.  The universal property of $\ms Q_I$ is that given any unital $*$-algebra $A$ and a family $(x_i)_{i \in I}$ of elements in $A$, there is a unique unital $*$-homomorphism $\mathrm{ev}_x:\ms Q_I \to A$ such that $\mathrm{ev}_x(t_i) = x_i$ for each $i \in I$.  We will also denote this map by $q \mapsto q(x)$ for $q \in \ms Q_I$.  

We define $\ms P_I$ to be the quotient of $\ms Q_I$ by the relations $t_i = t_i^*$ for $i \in I$.  The universal property of $\ms P_I$ is that whenever $A$ is a unital $*$-algebra and $(x_i)_{i \in I}$ is a family of self-adjoint elements in $A$, there is a unique homomorphism from $\ms P_I$ into $A$, which we also denote $\mathrm{ev}_x$, such that $\mathrm{ev}_x(t_i) = x_i$.  We will also denote this map by $p \mapsto p(x)$ for $p \in \ms P_I$.

We will mostly be interested in the case that $I = \{1,\dotsc,n\}$, in which case we denote $\ms Q_I$ and $\ms P_I$ by $\ms Q_n$ and $\ms P_n$, and $I = \N$ in which case we denote $\ms Q_I = \ms Q_\infty$, $\ms P_I = \ms P_\infty$.

\noindent\textbf{Free Probability.} We begin by recalling some basic notions from free probability, the reader is referred to \cite{vdn}, \cite{ns} for further information.

\begin{defn}\hfill
\begin{enumerate}
 \item A \textit{noncommutative probability space} is a pair $(A,\varphi)$, where $A$ is a unital $*$-algebra and $\varphi$ is a state.  
\item A noncommutative probability space $(M,\varphi)$, where $M$ is a von Neumann algebra and $\varphi$ is a faithful normal state, is called a \textit{W$^*$-probability space}.  We do not require $\varphi$ to be tracial.
\end{enumerate}

\end{defn}

\begin{defn}
The \textit{joint $*$-distribution} of a family $(x_i)_{i \in I}$ of random variables  in a noncommutative probability space $(A,\varphi)$ is the linear functional $\varphi_x$ on $\mathscr Q_I$ defined by $\varphi_x(q) = \varphi(q(x))$.  $\varphi_x$ is determined by the \textit{moments}
\begin{equation*}
 \varphi_x(t_{i_1}^{d_1}t_{i_2}^{d_2}\dotsb t_{i_k}^{d_k}) = \varphi(x_{i_1}^{d_1}x_{i_2}^{d_2}\dotsb x_{i_k}^{d_k}),
\end{equation*}
where $i_1,\dotsc,i_k \in I$ and $d_1,\dotsc,d_k \in \{1,*\}$.  When $x_i = x_i^*$ for each $i \in I$, $\varphi_x$ factors through $\ms P_I$ and we then use $\varphi_x$ to denote the induced linear functional on $\ms P_I$.
\end{defn}

\begin{rmk}
These definitions have natural ``operator-valued'' extensions given by replacing $\C$ by a more general algebra of scalars.  This is the right setting for the notion of freeness with amalgamation, which is the analogue of conditional independence in free probability.
\end{rmk}

\begin{defn}
A \textit{$B$-valued probability space} $(A,E)$ consists of a unital $*$-algebra $A$, a $*$-subalgebra $1 \in B \subset A$, and a conditional expectation $E:A \to B$, i.e. $E$ is a linear map such that $E[1] = 1$ and
\begin{equation*}
 E[b_1ab_2] = b_1E[a]b_2
\end{equation*}
for all $b_1,b_2 \in B$ and $a \in A$. 
\end{defn}

\begin{defn}
Let $(A,E)$ be a $B$-valued probability space and $(x_i)_{i \in I}$ a family of random variables in $A$.
\begin{enumerate}
\item  We let $B \langle t_i, t_i^*:i \in I\rangle$ denote the $*$-algebra of noncommutative polynomials with coefficients in $B$.  There is a unique $*$-homomorphism from $B \langle t_i,t_i^*: i \in I \rangle$ into $A$ which is the identity on $B$ and sends $t_i$ to $x_i$, which we denote by $p \mapsto p(x)$.
\item Likewise we let $B \langle t_i :i \in I \rangle$ denote the $*$-algebra of noncommutative polynomials with coefficients in $B$ and self-adjoint generators indexed by $I$.  If $x_i = x_i^*$ for each $i \in I$, then the homomorphism from (i) factors through $B \langle t_i: i \in I \rangle$, we will still denote this by $p \mapsto p(x)$.
\item The \textit{$B$-valued joint distribution} of the family $(x_i)_{i \in I}$ is the linear map $E_x:B\langle t_i,t_i^*:i \in I \rangle \to B$ defined by $E_x(p) = E[p(x)]$.  $E_x$ is determined by the \textit{$B$-valued moments}
\begin{equation*}
 E_x[b_0t_{i_1}^{d_1}\dotsb t_{i_k}^{d_k}b_{k}] = E[b_0x_{i_1}^{d_1}\dotsb x_{i_k}^{d_k}b_k]
\end{equation*}
for $b_0,\dotsc,b_k \in B$, $i_1,\dotsc,i_k \in I$ and $d_1,\dotsc,d_k \in \{1,*\}$.  If $x_i = x_i^*$ for every $i \in I$, then $E_x$ factors through $B \langle t_i: i \in I\rangle$ and we will then use $E_x$ to denote the induced linear map from $B \langle t_i:i \in I\rangle$ to $B$.

\item The family $(x_i)_{i \in I}$ is called \textit{free with respect to $E$} or \textit{free with amalgamation over $B$} if
\begin{equation*}
 E[p_1(x_{i_1},x_{i_1}^*)\dotsb p_k(x_{i_k},x_{i_k}^*)] = 0
\end{equation*}
whenever $p_1,\dotsc,p_k \in B \langle t,t^* \rangle$, $i_1,\dotsc,i_k \in I$, $i_1 \neq \dotsb \neq i_k$ and 
\begin{equation*} \\
E[p_l(x_{i_l},x_{i_l}^*)] = 0
\end{equation*}
for $1 \leq l \leq k$.

\end{enumerate}
\end{defn}

\begin{rmk}
Free independence with amalgamation has a rich combinatorial theory, developed by Speicher in \cite{memoir}.  The basic objects are non-crossing set partitions and free cumulants, which we will now recall.  For further information on the combinatorial aspects of free probability, the reader is referred to \cite{ns}.
\end{rmk}

\begin{defn}\hfill

\begin{enumerate}
\item A \textit{partition} $\pi$ of a set $S$ is a collection of disjoint, non-empty sets $V_1,\dotsc,V_r$ such that $V_1 \cup \dotsb \cup V_r = S$.  $V_1,\dotsc,V_r$ are called the \textit{blocks} of $\pi$, and we set $|\pi| = r$. If $s,t \in S$ are in the same block of $\pi$, we write $s \sim_\pi t$. The collection of partitions of $S$ will be denoted $\mc P(S)$, or in the case that $S =\{1,\dotsc,k\}$ by $\mc P(k)$.
\item Given $\pi,\sigma \in \mc P(S)$, we say that $\pi \leq \sigma$ if each block of $\pi$ is contained in a block of $\sigma$.  There is a least element of $\mc P(S)$ which is larger than both $\pi$ and $\sigma$, which we denote by $\pi \vee \sigma$.  
\item If $S$ is ordered, we say that $\pi \in \mc P(S)$ is \textit{non-crossing} if whenever $V,W$ are blocks of $\pi$ and $s_1 < t_1 < s_2 < t_2$ are such that $s_1,s_2 \in V$ and $t_1,t_2 \in W$, then $V = W$.  The set of non-crossing partitions of $S$ is denoted by $NC(S)$, or by $NC(k)$ in the case that $S = \{1,\dotsc,k\}$.

\item The non-crossing partitions can also be defined recursively, a partition $\pi \in \mc P(S)$ is non-crossing if and only if it has a block $V$ which is an interval, such that $\pi \setminus V$ is a non-crossing partition of $S \setminus V$.
\item  Given $i_1,\dotsc,i_k$ in some index set $I$, we denote by $\ker \mathbf i$ the element of $\mc P(k)$ whose blocks are the equivalence classes of the relation
\begin{equation*}
 s \sim t \Leftrightarrow i_s= i_t.
\end{equation*}
Note that if $\pi \in \mc P(k)$, then $\pi \leq \ker \mathbf i$ is equivalent to the condition that whenever $s$ and $t$ are in the same block of $\pi$, $i_s$ must equal $i_t$.
\item If $\pi \in NC(k)$ is such that every block of $\pi$ has exactly 2 elements, we call $\pi$ a \textit{non-crossing pair partition}.  We let $NC_2(k)$ denote the set of non-crossing pair partitions of $\{1,\dotsc,k\}$.
\item Let $d_1,\dotsc,d_k \in \{1,*\}$.  We let
\begin{equation*}
NC_2^{\mathbf d}(k) = \{\pi \in NC_2(k): s \sim_\pi t \Rightarrow d_s \neq d_t\}.
\end{equation*}
\end{enumerate}
\end{defn}

\begin{defn}
Let $(A,E)$ be a $B$-valued probability space. 
\begin{enumerate}
\renewcommand{\labelenumi}{(\roman{enumi})}
\item For each $k \in \N$, let $\rho^{(k)}:A^{\otimes_B k} \to B$ be a linear map (the tensor product is with respect to the natural $B-B$ bimodule structure on $A$).  For $n \in \N$ and $\pi \in NC(n)$, we define a linear map $\rho^{(\pi)}: A^{\otimes_B n} \to B$ recursively as follows.  If $\pi$ has only one block, we set
\begin{equation*}
 \rho^{(\pi)}[a_1 \otimes \dotsb \otimes a_n] = \rho^{(n)}(a_1 \otimes \dotsb \otimes a_n)
\end{equation*}
for any $a_1,\dotsc,a_n \in A$.  Otherwise, let $V = \{l+1,\dotsc,l+s\}$ be an interval of $\pi$.  We then define, for any $a_1,\dotsc,a_n \in A$, 
\begin{equation*}
 \rho^{(\pi)}[a_1 \otimes \dotsb \otimes a_n] = \rho^{(\pi \setminus V)}[a_1 \otimes \dotsb \otimes a_{l}\rho^{(s)}(a_{l+1} \otimes \dotsb \otimes a_{l+s}) \otimes \dotsb \otimes a_n].
\end{equation*}

\item For $k \in \N$, define the \textit{$B$-valued moment functions} $E^{(k)}:A^{\otimes_B k} \to B$ by
\begin{equation*}
 E^{(k)}[a_1 \otimes \dotsb \otimes a_k] = E[a_1\dotsb a_k].
\end{equation*}

\item The \textit{$B$-valued cumulant functions} $\kappa_E^{(k)}:A^{\otimes_B k} \to B$ are defined recursively for $\pi \in NC(k)$, $k \geq 1$, by the \textit{moment-cumulant formula}: for each $n \in \N$ and $a_1,\dotsc,a_n \in A$ we have
\begin{equation*}
 E[a_1\dotsb a_n] = \sum_{\pi \in NC(n)} \kappa_E^{(\pi)}[a_1 \otimes \dotsb \otimes a_n].
\end{equation*}
Note that the right hand side of this formula is equal to $\kappa_E^{(n)}(a_1 \otimes \dotsb \otimes a_n)$ plus lower order terms, and hence can be recursively solved for $\kappa_E^{(n)}$.  The moment and cumulant functions are related by the following formula (\cite{memoir}):
\begin{equation*}
 \kappa_E^{(\pi)}[a_1 \otimes \dotsb \otimes a_n] = \sum_{\substack{\sigma \in NC(n)\\ \sigma \leq \pi}} \mu_n(\sigma,\pi)E^{(\sigma)}[a_1 \otimes \dotsb \otimes a_n],
\end{equation*}
where $\mu_n$ is the \textit{M\"{o}bius function} on the partially ordered set $NC(n)$.  
\end{enumerate}
\end{defn}

\begin{rmk}
The key relation between $B$-valued cumulant functions and free independence with amalgamation is the following result of Speicher, which characterizes freeness in terms of the ``vanishing of mixed cumulants''.
\end{rmk}

\begin{thm}\textnormal{(\cite{memoir})}\label{vancum}  Let $(A,E)$ be a $B$-valued probability space and $(x_i)_{i \in I}$ a family of random variables in $A$.  Then the family $(x_i)_{i \in I}$ is free with amalgamation over $B$ if and only if
\begin{equation*}
 \kappa_{E}^{(\pi)}[x_{i_1}^{d_1}b_1 \otimes \dotsb \otimes x_{i_k}^{d_k}b_k] = 0
\end{equation*}
whenever $i_1,\dotsc,i_k \in I$, $b_1,\dotsc, b_k \in B$, $d_1,\dotsc,d_k \in \{1,*\}$ and $\pi \in NC(k)$ is such that $\pi \not\leq \ker \mathbf i$.
\end{thm}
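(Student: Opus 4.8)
The plan is to derive everything, following Speicher \cite{memoir}, from the moment--cumulant formula and M\"obius inversion already recorded above. Throughout I write $A_i$ for the $*$-subalgebra of $A$ generated by $B$ and $x_i$; with this notation the defining relation of freeness with amalgamation over $B$ says precisely that $E[a_1 \cdots a_k] = 0$ whenever $a_l \in A_{i_l}$, $i_1 \neq i_2 \neq \dots \neq i_k$, and $E[a_l] = 0$ for every $l$.

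First I would dispose of two routine preliminaries. The moment functions $E^{(\sigma)}$ are $B$-bimodule maps in the strong sense that elements of $B$ may be slid between adjacent tensor legs and pulled out of the first and last legs; since $\kappa_E^{(n)}[\,\cdot\,] = \sum_{\sigma \in NC(n)} \mu_n(\sigma, 1_n)\, E^{(\sigma)}[\,\cdot\,]$, where $1_n \in NC(n)$ is the one-block partition, the same is true of each $\kappa_E^{(n)}$ and hence, by its recursive construction from the $\kappa_E^{(|V|)}$ over the blocks $V$, of each $\kappa_E^{(\pi)}$. It follows that the coefficients $b_l$ in the statement play no essential role, and, more, that it suffices to prove $\kappa_E^{(n)}[a_1 \otimes \dots \otimes a_n] = 0$ whenever $n \geq 2$, each $a_j$ lies in some $A_{i_j}$, and the $i_j$ are not all equal --- the recursive definition of $\kappa_E^{(\pi)}$ then delivers the general assertion. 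The second preliminary is the \emph{formula for cumulants of products}: if $V_1, \dots, V_r$ are consecutive intervals partitioning $\{1, \dots, m\}$ and $\hat\sigma \in NC(m)$ is the associated partition, then
\begin{equation*}
\kappa_E^{(r)}\bigl[(a_1 \cdots a_{|V_1|}) \otimes \dots \otimes (a_{m - |V_r| + 1} \cdots a_m)\bigr] = \sum_{\substack{\pi \in NC(m) \\ \pi \vee \hat\sigma = 1_m}} \kappa_E^{(\pi)}[a_1 \otimes \dots \otimes a_m],
\end{equation*}
again a M\"obius computation on $NC(m)$ starting from the moment--cumulant formula.

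I would then organize the equivalence around the single identity
\begin{equation*}
E[a_1 \cdots a_n] = \sum_{\substack{\pi \in NC(n) \\ \pi \leq \ker \mathbf i}} \kappa_E^{(\pi)}[a_1 \otimes \dots \otimes a_n], \qquad a_j \in A_{i_j}.
\end{equation*}
Granting the bimodularity, subtracting this from the full moment--cumulant formula and inducting on $n$ shows that the validity of this identity for all such words is equivalent to the vanishing of mixed cumulants in the stated sense. So Theorem \ref{vancum} reduces to: the $x_i$ are free over $B$ if and only if the displayed identity holds for all $n$ and all $a_j \in A_{i_j}$. For the forward direction I would fix $n$ and induct: the moment--cumulant formula together with the inductive hypothesis reduces the claim to evaluating $E[a_1 \cdots a_n]$, which I compute directly from the defining relation by recursively centering the factors, re-express the resulting moments of sub-words lying inside a single $A_i$ via the moment--cumulant formula inside $A_i$, and match the outcome against the right-hand side --- the cumulants-of-products formula being exactly what lets one pass between cumulants of letters and cumulants of equal-index sub-products. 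For the converse I would take $a_1, \dots, a_k$ as in the defining relation, apply the identity to the word $a_1 \cdots a_k$ (whose individual letters have index-tuple $\mathbf j$), so that $E[a_1 \cdots a_k] = \sum_{\pi \leq \ker \mathbf j} \kappa_E^{(\pi)}[\text{letters}]$, and then induct on $k$: since consecutive letters carry distinct indices there is always a factor that $\pi$ leaves unlinked to the rest --- otherwise the non-crossing condition forces a crossing --- and peeling it off reintroduces the hypothesis $E[a_l] = 0$ and closes the induction.

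The hard part is precisely this matching, in both directions, of the non-crossing-partition expansion of a moment against the recursive ``sum over alternating centered products'' description of $E$ on a free family; the one structural input that makes both inductions run is that any non-crossing partition refining the index-kernel of a reduced word has an interval block contained in the letters of a single factor. Everything else --- the bimodularity of the cumulant functions and the cumulants-of-products formula --- is bookkeeping with the M\"obius function on $NC$, and the full details may be found in \cite{memoir}.
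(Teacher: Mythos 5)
The paper does not prove this statement: it is quoted from Speicher's memoir \cite{memoir}, so there is no in-paper argument to compare against. Your outline reproduces the standard proof from that source --- the $B$-bimodularity of the cumulant functions, the formula for cumulants with products as entries, the reduction of freeness to the restricted moment--cumulant formula $E[a_1\dotsb a_n]=\sum_{\pi\leq\ker\mathbf i}\kappa_E^{(\pi)}$, and the singleton-interval-block argument for the converse are exactly the right ingredients, and the reductions you describe all go through. The two places where the sketch is genuinely thin are the forward direction (freeness $\Rightarrow$ the restricted formula, which you only gesture at via ``recursively centering the factors'') and the passage from vanishing of mixed cumulants with entries $x_i^{d}b$ to vanishing with arbitrary entries from the algebras $A_i$; both are precisely where the cumulants-of-products formula must be deployed carefully, as in \cite{memoir}, but you have correctly identified that this is its role.
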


\noindent \textbf{Operator-valued semicircular and circular families.}  We now recall the combinatorial descriptions of semicircular and circular random variables, which are the free analogues of real and complex Gaussian random variables, respectively.  Operator-valued semicircular random variables were first considered by Voiculescu in \cite{opadd}, where they were shown to be the limiting distribution of an operator-valued free central limit theorem.  The combinatorial description which we now present is due to Speicher \cite{memoir}, where they are referred to as \textit{$B$-Gaussians}.  Operator-valued circular random variables can be viewed as a special case of Speicher's $B$-Gaussians, the definition given here is from \cite{bcircular}.

\begin{defn} 

Let $(A,E)$ be a $B$-valued probability space. 
\begin{enumerate}
\renewcommand{\labelenumi}{(\roman{enumi})}
\item A family $(s_i)_{i \in I}$ of self-adjoint random variables in $A$ is said to form a \textit{$B$-valued free centered semicircular family} if for any $i_1,\dotsc,i_k \in I$ and $b_0,\dotsc,b_k \in B$, we have
\begin{equation*}
 \kappa_E^{(\pi)}[b_0s_{i_1}b_1 \otimes \dotsb \otimes s_{i_k}b_k] = 0
\end{equation*}
unless $\pi \in NC_2(k)$ and $\pi \leq \ker \mathbf i$.  In particular, the family $(s_i)_{i \in I}$ is free with amalgamation over $B$ by Theorem \ref{vancum}.  The $B$-valued joint distribution of the family $(s_i)_{i \in I}$ is then determined by the linear maps $\eta_i:B \to B$, called the \textit{variances}, defined by
\begin{equation*}
 \eta_i(b) = \kappa_E^{(2)}[s_ib \otimes s_i] = E[s_i b s_i].
\end{equation*}
\item A family $(c_i)_{i \in I}$ of (non self-adjoint) random variables in $A$ is said to form a \textit{$B$-valued free centered circular family} if for any $i_1,\dotsc,i_k \in I$, $b_0,\dotsc,b_k \in B$ and $d_1,\dotsc,d_k \in \{1,*\}$ we have
\begin{equation*}
 \kappa_E^{(\pi)}[b_0c_{i_1}^{d_1}b_1 \otimes \dotsb \otimes c_{i_k}^{d_k}b_k] = 0
\end{equation*}
unless $\pi \in NC_2^{\mathbf d}(k)$ and $\pi \leq \ker \mathbf i$.  The $B$-valued joint distribution of $(c_i)_{i \in I}$ is then defined by the linear maps $\eta_i,\theta_i:B \to B$, also called the \textit{variances}, defined by
\begin{align*}
 \eta_i(b) &= \kappa_E^{(2)}[c_i^* b \otimes c_i]\\
\theta_i(b) &= \kappa_E^{(2)}[c_ib \otimes c_i^*].
\end{align*}
\end{enumerate}
\end{defn}

\begin{rmk}
We will use the following result, which is immediate from the definitions above and the moment-cumulant formula, in our proofs of Theorems \ref{infrot} and \ref{infunit}.
\end{rmk}

\begin{prop} \label{semcircexp}Let $(A,E)$ be a $B$-valued probability space.
\begin{enumerate}
 \item Let $(x_i)_{i \in \N}$ be a sequence of self-adjoint elements in $A$.  Then $(x_i)_{i \in \N}$ form a $B$-valued free centered semicircular family with common variance if and only if
\begin{align*}
 E[b_0x_{i_1}\dotsb x_{i_{2k}}b_{2k}] &= \sum_{\substack{\pi \in NC_2(2k)\\ \pi \leq \ker \mathbf i}} \kappa_E^{(\pi)}[b_0x_{1}b_1 \otimes \dotsb \otimes x_{1}b_{2k}]\\
E[b_0x_{i_1}\dotsb x_{i_{2k+1}}b_{2k+1}] &= 0
\end{align*}
for any $i_1,\dotsc,i_{2k+1} \in \N$ and $b_0,\dotsc,b_{2k+1} \in B$.
\item Let $(x_i)_{i \in \N}$ be a sequence in $A$.  Then $(x_i)_{i \in \N}$ form a $B$-valued free centered circular family with common variance if and only if
\begin{align*}
 E[b_0x_{i_1}^{d_1}\dotsb x_{i_{2k}}^{d_{2k}}b_{2k}] &= \sum_{ \substack{\pi \in NC_2^{\mathbf d}\\ \pi \leq \ker \mathbf i}} \kappa_E^{(\pi)}[b_0x_{1}^{d_1}b_1 \otimes \dotsb \otimes x_{1}^{d_{2k}}b_{2k}]\\
E[b_0x_{i_1}^{d_1}\dotsb x_{i_{2k+1}}^{d_{2k+1}}b_{2k+1}] &= 0
\end{align*}
for any $i_1,\dotsc,i_{2k+1} \in \N$, $b_0,\dotsc,b_{2k+1} \in B$ and $d_1,\dotsc,d_{2k+1} \in \{1,*\}$.
\end{enumerate}

\end{prop}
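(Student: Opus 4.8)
The plan is to read both equivalences off directly from the moment-cumulant formula together with the defining cumulant-vanishing property of semicircular (resp.\ circular) families. I will describe part (i); part (ii) is proved in the same way, with $NC_2$ replaced throughout by $NC_2^{\mathbf d}$ and with a little extra bookkeeping of the adjointness data $\mathbf d$ and of the second variance $\theta$. For the forward implication in (i), suppose $(x_i)_{i\in\N}$ is a $B$-valued free centered semicircular family with common variance $\eta$, and fix $b_0,\dotsc,b_m\in B$ and $i_1,\dotsc,i_m\in\N$. The moment-cumulant formula writes $E[b_0x_{i_1}b_1\dotsb x_{i_m}b_m]$ as $\sum_{\pi\in NC(m)}\kappa_E^{(\pi)}[b_0x_{i_1}b_1\otimes\dotsb\otimes x_{i_m}b_m]$, and by the definition of a semicircular family every term with $\pi\notin NC_2(m)$ or $\pi\not\le\ker\mathbf i$ vanishes. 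When $m=2k+1$ is odd, $NC_2(2k+1)=\emptyset$, so the moment is $0$, which is the second displayed identity. When $m=2k$ is even, I would unfold the recursive definition of $\kappa_E^{(\pi)}$ for $\pi\in NC_2(2k)$ and observe that a pair block $\{s,t\}$ of $\pi$ (with $s<t$) contributes a factor $\kappa_E^{(2)}[x_{i_s}\tilde b\otimes x_{i_t}]$, where $\tilde b\in B$ is assembled only from the coefficients $b_l$ and from the variances of inner blocks; since $\pi\le\ker\mathbf i$ forces $i_s=i_t$, this factor equals $\eta_{i_s}(\tilde b)=\eta(\tilde b)$ by the common-variance hypothesis, so the whole cumulant depends only on the $b_l$ and on $\eta$. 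Hence $\kappa_E^{(\pi)}[b_0x_{i_1}b_1\otimes\dotsb\otimes x_{i_{2k}}b_{2k}]=\kappa_E^{(\pi)}[b_0x_1b_1\otimes\dotsb\otimes x_1b_{2k}]$ for each surviving $\pi$, and summing over $\pi\in NC_2(2k)$ with $\pi\le\ker\mathbf i$ gives the first displayed identity.

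For the converse I would prove by induction on $m$ that $\kappa_E^{(m)}[b_0x_{i_1}b_1\otimes\dotsb\otimes x_{i_m}b_m]$ vanishes whenever $m\neq2$ and equals $\delta_{i_1i_2}\,b_0\,\eta(b_1)\,b_2$ when $m=2$, where $\eta(b):=E[x_1bx_1]$. The case $m=1$ is the $k=0$ instance of the hypothesis, which reads $\kappa_E^{(1)}[b_0x_{i_1}b_1]=E[b_0x_{i_1}b_1]=0$; in particular $E[x_i]=0$. The case $m=2$ then follows from the $k=1$ instance using $E[b_0x_{i_1}b_1x_{i_2}b_2]=\kappa_E^{(2)}[b_0x_{i_1}b_1\otimes x_{i_2}b_2]+E[b_0x_{i_1}b_1]\,E[x_{i_2}b_2]$ and $E[x_i]=0$. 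For $m\geq3$, I would isolate $\kappa_E^{(m)}$ in the moment-cumulant expansion of $E[b_0x_{i_1}b_1\dotsb x_{i_m}b_m]$: every other $\pi\in NC(m)$ has more than one block, hence all of its blocks of size $<m$, so by the recursive definition of $\kappa_E^{(\pi)}$ and the inductive hypothesis the corresponding term vanishes unless $\pi\in NC_2(m)$ and $\pi\le\ker\mathbf i$, in which case, exactly as in the forward direction, it equals the corresponding cumulant with every $x_{i_l}$ replaced by $x_1$. For odd $m$ there is no such $\pi$ and the hypothesis forces the moment to vanish, whence $\kappa_E^{(m)}=0$; for even $m$ the surviving lower-order terms are precisely those indexed by $\pi\in NC_2(m)$ with $\pi\le\ker\mathbf i$, whose sum equals the moment by the hypothesis, so again $\kappa_E^{(m)}=0$. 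This proves the inductive claim. Knowing all of the single-block cumulants, one last appeal to the recursive definition of $\kappa_E^{(\pi)}$ shows that $\kappa_E^{(\pi)}[b_0x_{i_1}b_1\otimes\dotsb\otimes x_{i_k}b_k]=0$ for every $\pi\in NC(k)$ unless $\pi\in NC_2(k)$ and $\pi\le\ker\mathbf i$; since moreover $\eta_i(b)=\kappa_E^{(2)}[x_ib\otimes x_i]=\eta(b)$ for every $i$, this is exactly the assertion that $(x_i)_{i\in\N}$ is a $B$-valued free centered semicircular family with common variance.

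I expect the only real work to be the bookkeeping in the recursive unfolding of $\kappa_E^{(\pi)}$ needed in both directions: verifying that, for $\pi\in NC_2(m)$ with $\pi\le\ker\mathbf i$, the iterated expression produced by the recursive definition is genuinely a function of the coefficients $b_l$ and of $\eta$ alone, and so is insensitive to the individual indices $i_1,\dotsc,i_m$. For part (ii) the same scheme goes through once one records that a $2$-block $\{s,t\}$ of $\pi$ (with $s<t$) contributes $\eta_{i_s}$ or $\theta_{i_s}$ according as $(d_s,d_t)$ equals $(*,1)$ or $(1,*)$, and contributes $0$---so that it drops out of the sum---when $d_s=d_t$; this forces the surviving partitions into $NC_2^{\mathbf d}(k)$ and forces both $\eta_i$ and $\theta_i$ to be independent of $i$. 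All remaining steps are formal consequences of the moment-cumulant formula and the recursive definition of the $B$-valued cumulant functions.
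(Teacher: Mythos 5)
Your argument is correct and is exactly the intended one: the paper offers no written proof, remarking only that the proposition is ``immediate from the definitions above and the moment-cumulant formula,'' and your proposal is precisely that argument carried out in detail (forward direction by discarding the vanishing cumulants and using common variance to replace each index by $1$; converse by inductively isolating $\kappa_E^{(m)}$ in the moment-cumulant expansion). The only points you rightly flag as bookkeeping --- well-definedness of the recursive unfolding of $\kappa_E^{(\pi)}$ and the $\eta/\theta$ case analysis in part (ii) --- are standard and consistent with the paper's definitions.
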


\noindent\textbf{Quantum Orthogonal and Unitary Groups.}  We recall the definitions of the universal quantum groups $A_o(n)$ and $A_u(n)$ from \cite{qorthunit}.  For further information about these compact quantum groups, see \cite{bbc}, \cite{bc}.   

\begin{defn} \hfill
\begin{enumerate}
 \item The \textit{quantum orthogonal group} $A_o(n)$ is the universal unital C$^*$-algebra with generators $\{u_{ij}: 1 \leq i,j \leq n\}$ and relations such that $u = (u_{ij}) \in M_n(A_o(n))$ is orthogonal, i.e. $u_{ij} = u_{ij}^*$ and $u^t = u^{-1}$.  In particular, we have
\begin{align*}
 \sum_{k = 1}^n u_{ki}u_{kj} = \delta_{ij}1_{A_o(n)} = \sum_{k =1}^n u_{ik}u_{jk}.
\end{align*}
$A_o(n)$ is a compact quantum group, with comultiplication, counit and antipode given by the formulas
\begin{align*}
 \Delta(u_{ij}) &= \sum_{k = 1}^n u_{ik} \otimes u_{kj}\\
\epsilon(u_{ij}) &= \delta_{ij}\\
S(u_{ij}) &= u_{ji}.
\end{align*}
The existence of the maps above are given by the universal property of $A_o(n)$.  $A_o(n)$ has a canonical dense Hopf $*$-algebra $\mc A_o(n)$, which is the $*$-algebra generated by $\{u_{ij}: 1 \leq i,j \leq n\}$.  A fundamental theorem of Woronowicz \cite{woronowicz} gives the existence of unique state $\psi_n:A_o(n) \to \C$, called that \textit{Haar state}, which is left and right invariant in the sense that
\begin{equation*}
 (\mathrm{id} \otimes \psi_n)\Delta_n(a) = \psi_n(a)1_{A_o(n)} = (\psi_n \otimes \mathrm{id})\Delta_n(a)
\end{equation*}
for any $a \in A_o(n)$.  We will denote the GNS representation for the Haar state by $\pi_{\psi_n}$, and we set $\frk A_o(n) = \pi_{\psi_n}(A_o(n))''$, which has a natural Hopf von Neumann algebra structure.  
\item The \textit{quantum unitary group} $A_u(n)$ is the universal C$^*$-algebra with generators $\{v_{ij}:1 \leq i,j \leq n\}$ and relations such that the matrix $(v_{ij}) \in M_n(A_o(n))$ is unitary.  More explicitly, the relations are
\begin{equation*}
 \sum_{k = 1}^n v_{ki}^*v_{kj} = \delta_{ij}1_{A_u(n)} = \sum_{k =1}^n v_{ik}v_{jk}^*.
\end{equation*}
$A_u(n)$ is a compact quantum group with comultiplication, counit and antipode given by the formulas
\begin{align*}
 \Delta(v_{ij}) &= \sum_{k = 1}^n v_{ik} \otimes v_{kj}\\
\epsilon(v_{ij}) &= \delta_{ij}\\
S(v_{ij}) &= v_{ji}^*.
\end{align*}
As for $A_o(n)$, the existence of these maps is given by the universal property of $A_u(n)$. We let $\mc A_u(n)$ denote the canonical dense Hopf $*$-algebra generated by $\{v_{ij}:1 \leq i,j \leq n\}$.  We will also use $\psi_n$ to denote the Haar state on $A_u(n)$, and $\pi_{\psi_n}$ the corresponding GNS representation.  We define $\frk A_u(n)$ to be the Hopf von Neumann algebra $\frk A_u(n) = \pi_{\psi_n}(A_u(n))''$.  

\end{enumerate}

\end{defn}

\begin{rmk}
If one adds commutativity to the above relations, then the resulting universal C$^*$-algebras are simply the continuous functions on the orthogonal and unitary groups, respectively.  We will need the following formulas for the Haar states on $A_o(n)$ and $A_u(n)$, which were computed by Banica and Collins in \cite{bc}.
\end{rmk}

\begin{rmk}\label{haar} \textbf{The Haar States.}  
\begin{enumerate}
 \item For $k \in \N$, let $G_{kn}$ be the matrix with entries indexed by non-crossing pair partitions in $NC_2(2k)$ defined by
\begin{equation*}
 G_{kn}(\pi,\sigma) = n^{|\pi \vee \sigma|},
\end{equation*}
where the join is taken in the lattice $\mc P(2k)$.  For $n \geq 2$, $G_{kn}$ is invertible and the \textit{Weingarten matrix} $W_{kn}$ is then defined as its inverse.  The Haar state on $A_o(n)$ is determined by the formula
\begin{align*}
 \psi_n(u_{i_1j_1}\dotsb u_{i_{2k}j_{2k}}) &= \sum_{\substack{\pi,\sigma \in NC_2(2k)\\ \pi \leq \ker \mathbf i\\ \sigma \leq \ker \mathbf j}} W_{kn}(\pi,\sigma)\\
\psi_n(u_{i_1j_1}\dotsb u_{i_{2k+1}j_{2k+1}}) &= 0.
\end{align*}
In particular, note that
\begin{equation*}
 \psi_n(u_{i_1j}u_{i_2j}) = \frac{1}{n}\delta_{i_1i_2}
\end{equation*}
for any $1 \leq i_1,i_2,j \leq n$.  The key fact about $W_{kn}$ which we will need is the following asymptotic estimate:
\begin{equation*}
 n^kW_{kn}(\pi,\sigma) = \delta_{\pi\sigma} + O(n^{-1}).
\end{equation*}
This follows from the power series expansion for $W_{kn}$ computed in \cite[Proposition 7.2]{bc}.
\item Let $d_1,\dotsc,d_{2k} \in \{1,*\}$.  We then let $G_{\mathbf dn}$ to be the matrix with entries indexed by $NC_2^{\mathbf d}(2k)$, defined by
\begin{equation*}
 G_{\mathbf dn}(\pi,\sigma) = n^{|\pi \vee \sigma|},
\end{equation*}
where the join is taken in the lattice $\mc P(2k)$.  We likewise define a Weingarten matrix $W_{\mathbf d n}$ to be the inverse of $G_{\mathbf d n}$, which exists for $n \geq 2$.  The Haar state on $A_u(n)$ is then determined by the formula
\begin{align*}
 \psi_n(v_{i_1j_1}^{d_1}\dotsb v_{i_{2k}j_{2k}}^{d_{2k}}) &= \sum_{\substack{\pi,\sigma \in NC_2^{\mathbf d}(2k)\\ \pi \leq \ker \mathbf i\\ \sigma \leq \ker \mathbf j}} W_{\mathbf d n}(\pi,\sigma)\\
\psi_n(v_{i_1j_1}^{d_1}\dotsb v_{i_{2k+1}j_{2k+1}}^{d_{2k+1}}) &= 0.
\end{align*}
We will need the following asymptotic estimate on $W_{\mathbf d n}$:
\begin{equation*}
 n^kW_{\mathbf d n}(\pi,\sigma) = \delta_{\pi\sigma} + O(n^{-1}).
\end{equation*}
This may be proved similarly to \cite[Proposition 7.2]{bc}, or by using the approach found in \cite[Lemma 4.12]{qexcalg}.
\end{enumerate}

\end{rmk}

\section{Finite quantum rotatable sequences}

\begin{rmk}Let $\alpha_n:\ms P_n \to \ms P_n \otimes \mc A_o(n)$ be the unique unital homomorphism determined by
\begin{equation*}
 \alpha_n(t_j) = \sum_{i=1}^n t_i \otimes u_{ij}.
\end{equation*}
It is easy to see that $\alpha_n$ is a right coaction of the Hopf $*$-algebra $\mc A_o(n)$ on $\ms P_n$, i.e.
\begin{align*}
 (\mathrm{id} \otimes \Delta) \circ \alpha_n &= (\alpha_n \otimes \mathrm{id}) \circ \alpha_n\\
\intertext{and}
(\mathrm{id} \otimes \epsilon) \circ \alpha_n &= \mathrm{id}.
\end{align*}

\end{rmk}

\begin{defn}
Let $(x_1,\dotsc,x_n)$ be a sequence of self-adjoint random variables in the noncommutative probability space $(A,\varphi)$.  We say that the distribution $\varphi_x$ is \textit{invariant under quantum orthogonal transformations}, or that the sequence $(x_1,\dotsc,x_n)$ is \textit{quantum orthogonally invariant} or \textit{quantum rotatable}, if $\varphi_x$ is invariant under the coaction $\alpha_n$, i.e.
\begin{equation*}
 (\varphi_x \otimes \mathrm{id})\alpha_n(p) = \varphi_x(p)1_{A_o(n)}
\end{equation*}
for all $p \in \ms P_n$.  
\end{defn}

\begin{rmk}\textit{Remarks.} \label{qrotrmk}
\begin{enumerate}
 \item More explicitly, the sequence $(x_1,\dotsc,x_n)$ is quantum rotatable if for any $1 \leq j_1,\dotsc,j_k \leq n$ we have
\begin{equation*}
 \sum_{1 \leq i_1,\dotsc,i_k \leq n} \varphi(x_{i_1}\dotsb x_{i_k})u_{i_1j_1}\dotsb u_{i_kj_k} = \varphi(x_{j_1}\dotsb x_{j_k})  1
\end{equation*}
as an equality in $A_o(n)$.
\item By the universal property of $A_o(n)$, the sequence $(x_1,\dotsc,x_n)$ is quantum rotatable if and only if the equation in (i) holds for any family $\{u_{ij}: 1 \leq i,j \leq n\}$ of self-adjoint elements in a unital C$^*$-algebra $B$ such that $(u_{ij}) \in M_n(B)$ is an orthogonal matrix.
\item For $1 \leq i,j \leq n$, define $f_{ij} \in C(O_n)$ by $f_{ij}(T) = T_{ij}$ for $T \in O(n)$.  The matrix $(f_{ij}) \in M_n(C(O_n))$ is orthogonal and the equation in (i) becomes 
\begin{equation*}
 \varphi(x_{j_1}\dotsb x_{j_k})1_{C(O_n)} = \sum_{1 \leq i_1,\dotsc,i_k \leq n} \varphi(x_{i_1}\dotsb x_{i_k})f_{i_1j_1}\dotsb f_{i_kj_k}.
\end{equation*}
It follows that for any $T \in O_n$,
\begin{align*}
 \varphi(x_{j_1}\dotsb x_{j_k}) &= \sum_{1 \leq i_1,\dotsc,i_k \leq n} \varphi(x_{i_1}\dotsb x_{i_k})T_{i_1j_1}\dotsb T_{i_kj_k}\\
&= \varphi(T(x)_{j_1}\dotsb T(x)_{j_k}),
\end{align*}
where $T(x)$ is the sequence obtained by applying $T$ to $(x_1,\dotsc,x_n)$ in the obvious way.  So quantum orthogonal invariance implies orthogonal invariance.

\item By taking $\{u_{ij}: 1\leq i,j \leq n\}$ to be the generators of the quantum permutation group $A_s(n)$, it follows from (ii) that quantum rotatability implies quantum exchangeability as defined in \cite{spekos}.
\end{enumerate}

\end{rmk}

\begin{rmk}
First we will show that operator-valued free centered semicircular families with common variance are quantum rotatable.  This holds in a purely algebraic context.  The proof is along the same lines as \cite[Proposition 3.1]{spekos}.
\end{rmk}

\begin{prop}\label{semrot}
Let $(A,\varphi)$ be a noncommutative probability space, $1 \in B \subset A$ a subalgebra and $E:B \to A$ a conditional expectation which preserves $\varphi$.  Suppose that $s_1,\dotsc,s_n$ form a $B$-valued free centered semicircular family with common variance.  Then the sequence $(s_1,\dotsc,s_n)$ is quantum rotatable.
\end{prop}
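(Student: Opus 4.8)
The plan is to verify the moment identity of Remark \ref{qrotrmk}(i) directly. Since $\alpha_n$ is a unital homomorphism and $\ms P_n$ is generated by $t_1,\dotsc,t_n$, quantum rotatability of $(s_1,\dotsc,s_n)$ is equivalent to the assertion that
\[
\sum_{1 \le i_1,\dotsc,i_k \le n} \varphi(s_{i_1}\dotsb s_{i_k})\, u_{i_1 j_1}\dotsb u_{i_k j_k} = \varphi(s_{j_1}\dotsb s_{j_k})\, 1
\]
holds in $A_o(n)$ for every $k \ge 0$ and all $1 \le j_1,\dotsc,j_k \le n$. For $k$ odd both sides vanish by Proposition \ref{semcircexp}(1), so the substance is the case $k = 2m$.

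Next I would bring in the hypothesis on $(s_i)$. Since $E$ preserves $\varphi$, Proposition \ref{semcircexp}(1) applied with all coefficients equal to $1$ gives
\[
\varphi(s_{i_1}\dotsb s_{i_{2m}}) = \sum_{\substack{\pi \in NC_2(2m)\\ \pi \le \ker\mathbf i}} \varphi\bigl(\kappa_E^{(\pi)}[s_1 \otimes \dotsb \otimes s_1]\bigr).
\]
The crucial observation, and the point where a \emph{common} variance $\eta$ is needed, is that for $\pi \in NC_2(2m)$ with $\pi \le \ker\mathbf i$ the element $\kappa_E^{(\pi)}[s_{i_1} \otimes \dotsb \otimes s_{i_{2m}}] \in B$ is independent of $\mathbf i$: the recursive definition of $\kappa_E^{(\pi)}$ for a pair partition assembles it out of iterated applications of the single variance map (because $\kappa_E^{(2)}[s_i b \otimes s_i] = \eta(b)$ does not depend on $i$), and on each block of $\pi$ the two indices agree. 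Writing $c_\pi := \varphi\bigl(\kappa_E^{(\pi)}[s_1 \otimes \dotsb \otimes s_1]\bigr) \in \C$, this yields $\varphi(s_{i_1}\dotsb s_{i_{2m}}) = \sum_{\pi \le \ker\mathbf i} c_\pi$. Substituting into the left side of the moment identity and interchanging the two (finite) sums, the problem reduces to showing, for each fixed $\pi \in NC_2(2m)$,
\[
\sum_{\substack{1 \le i_1,\dotsc,i_{2m} \le n\\ \pi \le \ker\mathbf i}} u_{i_1 j_1}\dotsb u_{i_{2m} j_{2m}} =
\begin{cases}
1_{A_o(n)}, & \pi \le \ker\mathbf j,\\
0, & \text{otherwise,}
\end{cases}
\]
since summing this against $c_\pi$ over $\pi$ leaves exactly $\sum_{\pi \le \ker\mathbf j} c_\pi \cdot 1 = \varphi(s_{j_1}\dotsb s_{j_{2m}})\,1$ (the last equality again by Proposition \ref{semcircexp}(1), now with $\mathbf j$).

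This last identity I would prove by induction on $m$ from the recursive structure of non-crossing pair partitions: $\pi$ has an interval block $\{l, l+1\}$, and $\pi \le \ker\mathbf i$ forces $i_l = i_{l+1}$ while constraining the remaining indices precisely by $\pi' := \pi \setminus \{l, l+1\} \le \ker\mathbf i'$, where $\mathbf i'$ omits the entries $l, l+1$. Summing the two middle factors over their common value gives $\sum_{a=1}^n u_{a j_l} u_{a j_{l+1}} = \delta_{j_l j_{l+1}}\, 1$ by the orthogonality relations of $A_o(n)$; being a scalar it pulls out of the product, and the inductive hypothesis applied to $\pi'$ finishes the step, since $\pi \le \ker\mathbf j$ holds exactly when $j_l = j_{l+1}$ and $\pi' \le \ker\mathbf j'$. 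The base case $m = 1$ is simply $\sum_a u_{a j_1} u_{a j_2} = \delta_{j_1 j_2}\, 1$. I do not anticipate a serious obstacle: the two things to be careful about are the independence of $\kappa_E^{(\pi)}[s_1 \otimes \dotsb \otimes s_1]$ of the index chosen — which is exactly what makes the common-variance hypothesis indispensable — and the centrality of $\sum_a u_{aj} u_{aj'}$ in $A_o(n)$, which is what legitimises the inductive factoring. The whole argument is algebraic and uses neither faithfulness nor normality of $\varphi$, as the statement allows.
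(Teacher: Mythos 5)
Your argument is correct and is essentially the paper's own proof: both expand $\varphi(s_{i_1}\dotsb s_{i_{2m}})$ via the moment--cumulant formula, use the common variance to pull the $\mathbf i$-independent cumulant values $c_\pi$ out of the sum, and reduce to the identity $\sum_{\pi \le \ker\mathbf i} u_{i_1j_1}\dotsb u_{i_{2m}j_{2m}} = \delta_{\pi \le \ker\mathbf j}\,1$, proved by the same induction on an interval block of $\pi$ using the orthogonality relations. The odd-moment case is likewise handled identically, so there is nothing to add.
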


\begin{proof}
Let $1 \leq j_1,\dotsc,j_{2k} \leq n$, then
\begin{multline*}
 \sum_{1 \leq i_1,\dotsc,i_{2k} \leq n} \varphi(s_{i_1}\dotsb s_{i_{2k}})u_{i_1j_1}\dotsb u_{i_{2k}j_{2k}}\\
= \sum_{1 \leq i_1,\dotsc,i_{2k} \leq n} \varphi(E[s_{i_1}\dotsb s_{i_{2k}}])u_{i_1j_1}\dotsb u_{i_{2k}j_{2k}}\\
= \sum_{1 \leq i_1,\dotsc,i_{2k} \leq n} \sum_{\substack{\pi \in NC_2(2k)\\ \pi \leq \ker \mathbf i}} \varphi(\kappa_E^{(\pi)}[s_{i_1} \otimes \dotsb \otimes s_{i_{2k}}])u_{i_1j_1}\dotsb u_{i_{2k}j_{2k}}
\end{multline*}
Since the variables have common variance, given $\pi \in NC_2(2k)$ the value of $\kappa_E^{(\pi)}[s_{i_1} \otimes \dotsb \otimes s_{i_{2k}}]$ is the same for any $1 \leq i_1,\dotsc,i_{2k} \leq n$ such that $\pi \leq \ker \mathbf i$, we denote this common value by $\kappa_E^{(\pi)}$.  We then have
\begin{multline*}
 \sum_{1 \leq i_1,\dotsc,i_{2k} \leq n} \varphi(s_{i_1}\dotsb s_{i_{2k}})u_{i_1j_1}\dotsb u_{i_{2k}j_{2k}} \\
= \sum_{\pi \in NC_2(2k)} \varphi(\kappa_E^{(\pi)})\sum_{\substack{1 \leq i_1,\dotsc,i_{2k} \leq n\\ \pi \leq \ker \mathbf i}} u_{i_1j_1}\dotsb u_{i_{2k}j_{2k}}.
\end{multline*}
We now claim that for any $\pi \in NC_2(2k)$ and $1 \leq j_1,\dotsc,j_{2k} \leq n$, we have
\begin{equation*}
 \sum_{\substack{1 \leq i_1,\dotsc,i_{2k} \leq n\\ \pi \leq \ker \mathbf i}} u_{i_1j_1}\dotsb u_{i_{2k}j_{2k}} = \begin{cases} 1_{A_o(n)}, & \pi \leq \ker \mathbf j\\ 0, & \text{otherwise}\end{cases}.
\end{equation*}
We prove this by induction, the case $k=1$ is simply the orthogonality relation.  Suppose $k > 1$, let $\pi \in NC_2(2k)$ and let $V = \{l,l+1\}$ be an interval of $\pi$.  Then
\begin{multline*}
\sum_{\substack{1 \leq i_1,\dotsc,i_{2k} \leq n\\ \pi \leq \ker \mathbf i}} u_{i_1j_1}\dotsb u_{i_{2k}j_{2k}} \\
= \sum_{\substack{1 \leq i_1,\dotsc,i_{l-1},i_{l+2},\dotsc i_{2k} \leq n\\ (\pi \setminus V) \leq \ker \mathbf i}} u_{i_1j_1}\dotsb u_{i_{l-1}j_{l-1}}\biggl(\sum_{i=1}^n u_{ij_l}u_{ij_{l+1}}\biggr) u_{i_{l+1}j_{l+1}}u_{i_{2k}j_{2k}}\\
= \delta_{j_l j_{l+1}} \sum_{\substack{1 \leq i_1,\dotsc,i_{l-1},i_{l+2},\dotsc i_{2k} \leq n\\ (\pi \setminus V) \leq \ker \mathbf i}} u_{i_1j_1}\dotsb u_{i_{l-1}j_{l-1}} u_{i_{l+1}j_{l+1}}u_{i_{2k}j_{2k}}
\end{multline*}
and the result follows from induction.  Plugging this in above, we find
\begin{align*}
 \sum_{1 \leq i_1,\dotsc,i_{2k} \leq n} \varphi(s_{i_1}\dotsb s_{i_{2k}})u_{i_1j_1}\dotsb u_{i_{2k}j_{2k}} &= \sum_{\substack{\pi \in NC_2(2k)\\ \pi \leq \ker \mathbf j}} \varphi(\kappa_E^{(\pi)})1_{A_o(n)}\\
&= \varphi(s_{j_1}\dotsb s_{j_{2k}})1_{A_o(n)}.
\end{align*}
Since also
\begin{align*}
 \sum_{1 \leq i_1,\dotsc,i_{2k+1} \leq n} \varphi(s_{i_1}\dotsb s_{i_{2k+1}})u_{i_1j_1}\dotsb u_{i_{2k+1}j_{2k+1}} &= \varphi(s_{j_1}\dotsb s_{j_{2k+1}})1_{A_o(n)}\\
&= 0
\end{align*}
for any $1 \leq j_1,\dotsc,j_{2k+1} \leq n$, it follows that $(s_1,\dotsc,s_n)$ is quantum rotatable.
\end{proof}

\begin{rmk}
Throughout the rest of this section, $(M,\varphi)$ will be a W$^*$-probability space and $(x_1,\dotsc,x_n)$ a sequence in $M$. We set $M_n = W^*(x_1,\dotsc,x_n)$ and $\varphi_n = \varphi|_{M_n}$.  We define
\begin{equation*}
 \eu{QR}_n = \mathrm{W}^*(\{p(x): p \in \ms P_n^{\alpha_n}\}),
\end{equation*}
where $\ms P_n^{\alpha_n}$ denotes the fixed point algebra of the coaction $\alpha_n$, i.e.
\begin{equation*}
 \ms P_n^{\alpha_n} = \{p \in \ms P_n: \alpha_n(p) = p \otimes 1\}.
\end{equation*}

\end{rmk}

\begin{prop}\label{rotcoact}
Let $(x_1,\dotsc,x_n)$ be a quantum rotatable sequence in $(M,\varphi)$.  Then there is a right coaction $\widetilde \alpha_n:M_n \to M_n \otimes \frk A_o(n)$ of the Hopf von Neumann algebra $\frk A_o(n)$ on $M_n$ determined by
\begin{equation*}
 \widetilde \alpha_n(p(x)) = (\mathrm{ev}_x \otimes \pi_{\psi_n}) \alpha_n(p)
\end{equation*}
for $p \in \ms P_n$.  Moreover, the fixed point algebra of $\widetilde \alpha_n$ is precisely $\eu{QR}_n$.
\end{prop}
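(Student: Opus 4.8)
The plan is to construct $\widetilde\alpha_n$ first on the weakly dense $*$-subalgebra $\ms P_n(x) := \{p(x) : p \in \ms P_n\}$ of $M_n$ and then extend it normally by a GNS argument. Quantum rotatability is needed precisely to see that the defining formula is consistent and state-preserving; this is the main obstacle. Everything afterwards amounts to transporting algebraic identities for $\alpha_n$ (coassociativity, and the Haar-state averaging $\Phi := (\mathrm{id}\otimes\psi_n)\circ\alpha_n$) through a map already known to be a normal $*$-homomorphism, much as in \cite{qexcalg}.

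For well-definedness, start from the $*$-homomorphism $\beta_n := (\mathrm{ev}_x \otimes \pi_{\psi_n}) \circ \alpha_n \colon \ms P_n \to M_n \otimes \frk A_o(n)$. Applying $\psi_n$ to the quantum rotatability identity $(\varphi_x \otimes \mathrm{id})\alpha_n(p) = \varphi_x(p)1$ gives $(\varphi_n \otimes \psi_n)\circ\beta_n = \varphi_x$ on $\ms P_n$, whence $(\varphi_n \otimes \psi_n)\bigl(\beta_n(p)^*\beta_n(p)\bigr) = \varphi\bigl(p(x)^*p(x)\bigr)$ for all $p$, using that $\beta_n$ is a $*$-homomorphism. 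Since $\varphi$ is faithful and the Haar state extends to a faithful normal state on $\frk A_o(n)$, the product state $\varphi_n \otimes \psi_n$ is faithful normal on $M_n \otimes \frk A_o(n)$, so $p(x) = 0$ forces $\beta_n(p) = 0$. Thus $\beta_n$ descends to a $*$-homomorphism $\widetilde\alpha_n$ on $\ms P_n(x)$ with the stated formula and with $(\varphi_n \otimes \psi_n) \circ \widetilde\alpha_n = \varphi_n$ there.

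To extend, let $\Omega$, $\xi$ be the GNS vectors of $\varphi_n$, $\psi_n$, so $L^2(M_n \otimes \frk A_o(n)) = L^2(M_n) \otimes L^2(\frk A_o(n))$ with cyclic vector $\Omega \otimes \xi$. The state-preserving property shows $a\Omega \mapsto \widetilde\alpha_n(a)(\Omega \otimes \xi)$ extends to an isometry $U$ with $\widetilde\alpha_n(a)U = U\lambda(a)$ for $a \in \ms P_n(x)$, where $\lambda$ is the GNS representation; hence $U^*\widetilde\alpha_n(a)U = \lambda(a)$, so $\widetilde\alpha_n$ is isometric and injective on the weakly dense subalgebra $\ms P_n(x)$, and it extends uniquely to a normal injective unital $*$-homomorphism $M_n \to M_n \otimes \frk A_o(n)$ (the target is weakly closed). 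Coassociativity $(\widetilde\alpha_n \otimes \mathrm{id}) \circ \widetilde\alpha_n = (\mathrm{id} \otimes \Delta) \circ \widetilde\alpha_n$ holds on $\ms P_n(x)$ because the corresponding identity holds for $\alpha_n$ on $\ms P_n$ (push it through $\mathrm{ev}_x$, $\pi_{\psi_n}$, and the compatibility of the comultiplications on $\frk A_o(n)$ and $A_o(n)$); both sides are normal, so it holds on all of $M_n$, and $\widetilde\alpha_n$ is a right coaction of $\frk A_o(n)$.

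For the fixed point algebra, $\eu{QR}_n \subseteq M_n^{\widetilde\alpha_n}$ is immediate, since $p \in \ms P_n^{\alpha_n}$ gives $\widetilde\alpha_n(p(x)) = (\mathrm{ev}_x \otimes \pi_{\psi_n})(p \otimes 1) = p(x) \otimes 1$ and $M_n^{\widetilde\alpha_n}$ is weakly closed. Conversely, let $E_0 := (\mathrm{id} \otimes \psi_n) \circ \widetilde\alpha_n$, a normal $\varphi_n$-preserving conditional expectation onto $M_n^{\widetilde\alpha_n}$ (the standard averaging argument using invariance of $\psi_n$ and coassociativity of $\widetilde\alpha_n$). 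On polynomials $E_0(p(x)) = \Phi(p)(x)$, and $\Phi$ is, by the same averaging at the Hopf-$*$-algebra level, an idempotent onto $\ms P_n^{\alpha_n}$; hence $E_0(p(x)) \in \{q(x) : q \in \ms P_n^{\alpha_n}\} \subseteq \eu{QR}_n$. Since $E_0$ is normal and $\ms P_n(x)$ is weakly dense in $M_n$, we get $E_0(M_n) \subseteq \eu{QR}_n$, so any $a \in M_n^{\widetilde\alpha_n}$ satisfies $a = E_0(a) \in \eu{QR}_n$, completing the proof.
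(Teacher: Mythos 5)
Your overall strategy---define the map on the weakly dense polynomial $*$-algebra, use quantum rotatability together with faithfulness of $\varphi_n\otimes\psi_n$ for well-definedness, and identify the fixed point algebra via the averaging $E_0=(\mathrm{id}\otimes\psi_n)\circ\widetilde\alpha_n$ and the idempotent $\Phi=(\mathrm{id}\otimes\psi_n)\circ\alpha_n$ onto $\ms P_n^{\alpha_n}$---is sound, and both the well-definedness step and the fixed-point identification are correct. Note, though, that the paper does not argue this way: it simply invokes \cite[Theorem 3.3]{qexcalg} to get the coaction on the von Neumann algebra generated by the GNS representation of $\varphi_x$, and then transports it to $M_n$ through the isomorphism supplied by faithfulness of $\varphi$. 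You are in effect re-proving that cited theorem, which is legitimate, but it means you must actually supply its hardest step.

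That step is where your argument has a genuine gap: the passage from ``$\widetilde\alpha_n$ is isometric and injective on the weakly dense subalgebra $\ms P_n(x)$'' to ``it extends uniquely to a normal $*$-homomorphism on $M_n$.'' Norm-isometry on a $\sigma$-weakly dense $*$-subalgebra does not yield a normal extension; for instance the identity map $C[0,1]\to C[0,1]\subset L^\infty(\mu)$ is an isometric $*$-homomorphism defined on a weakly dense subalgebra of $L^\infty(\mathrm{Leb})$ yet admits no normal extension when $\mu$ is not absolutely continuous with respect to Lebesgue measure, so the parenthetical ``the target is weakly closed'' does not carry the weight you put on it. Concretely, your isometry $U$ satisfies $\widetilde\alpha_n(a)U=U\lambda(a)$ and hence only controls $\widetilde\alpha_n(a)$ on $\overline{\mathrm{ran}\,U}$; it says nothing about the complement, so you cannot define the extension by $m\mapsto UmU^*$ (that formula disagrees with $\widetilde\alpha_n$ already on $\ms P_n(x)$ unless $UU^*=1$). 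The standard repair is to replace $U$ by the larger isometry $W$ on $L^2(M_n,\varphi_n)\otimes L^2(\frk A_o(n),\psi_n)$ determined by $W(\widehat a\otimes\widehat b)=\widetilde\alpha_n(a)(\widehat 1\otimes\widehat b)$: its isometry follows from the invariance identity $(\varphi_n\otimes\mathrm{id})\widetilde\alpha_n(y)=\varphi_n(y)1$ (the same place quantum rotatability enters your first step), its range is dense by the counit/Podle\'{s} density property of the Hopf algebraic coaction $\alpha_n$, and the intertwining $W(a\otimes 1)=\widetilde\alpha_n(a)W$ then lets you define the normal extension by $\widetilde\alpha_n(m):=W(m\otimes 1)W^*$. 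Alternatively, simply quote \cite[Theorem 3.3]{qexcalg} as the paper does. (A cosmetic point: your name $\beta_n$ for the intermediate map collides with the paper's notation for the coactions on $\ms P_\infty$ in Section 4.)
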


\begin{proof}
Let $(\pi,\mc H,\xi)$ be the GNS representation of $\ms P_n$ for the state $\varphi_x$, and let $N = W^*(\pi(\ms P_n))$.  By \cite[Theorem 3.3]{qexcalg}, there is a right coaction $\alpha_n':N \to N \otimes \frk A_o(n)$ determined by
\begin{equation*}
 \alpha_n'(\pi(p)) = (\pi \otimes \pi_{\psi_n}) \alpha_n(p)
\end{equation*}
for $p \in \ms P_n$, and the fixed point algebra of $\alpha_n'$ is the weak closure of $\pi(\ms P_n^{\alpha_n})$.  Since $\varphi$ is a faithful state, there is a natural isomorphism $\theta:N \to M_n$ such that $\theta(\pi(p)) = p(x)$.  We can then define the coaction $\widetilde \alpha_n:M_n \to M_n \otimes \frk A_o(n)$ by
\begin{equation*}
 \widetilde \alpha_n = (\theta \otimes \mathrm{id}) \circ \alpha_n' \circ \theta^{-1},
\end{equation*}
and the result follows.
\end{proof}

\begin{rmk}
Using the invariance of the Haar state $\psi_n$, it is easily seen that the map
\begin{equation*}
 E_{\eu{QR}_n}[m] = (\mathrm{id} \otimes \psi_n)\widetilde \alpha_n(m)
\end{equation*}
is a $\varphi$-preserving conditional expectation of $M_n$ onto $\eu{QR}_n$.  We will now prove Theorem \ref{finrot} by showing that the $\eu{QR}_n$-valued distribution of $(x_1,\dotsc,x_n)$ is close to that of a $\eu{QR}_n$-valued free centered semicircular family with common variance.  First we need the following lemma.
\end{rmk}

\begin{lem}\label{rotcum}
Let $x_1,\dotsc,x_n$ be a quantum rotatable sequence in $(M,\varphi)$.  Then for any $b_0,\dotsc,b_{2k} \in \eu{QR}_n$ and $\pi \in NC_2(2k)$, we have
\begin{equation*}
\kappa_{E_{\eu{QR}_n}}^{(\pi)}[b_0x_{1}b_1 \otimes \dotsb \otimes x_{1}b_{2k}] = n^{-k}\sum_{\substack{1 \leq i_1,\dotsc,i_{2k} \leq n\\ \pi \leq \ker \mathbf i}} b_0x_{i_1}\dotsb x_{i_1}b_{2k}.
\end{equation*}
\end{lem}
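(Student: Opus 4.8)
The plan is to express the cumulant $\kappa_{E_{\eu{QR}_n}}^{(\pi)}$ in terms of moments of the $\eu{QR}_n$-valued distribution of $(x_1,\dots,x_n)$ via the Möbius inversion formula, then use the explicit form of the conditional expectation $E_{\eu{QR}_n}[m] = (\mathrm{id} \otimes \psi_n)\widetilde\alpha_n(m)$ together with the Banica--Collins Weingarten formula for $\psi_n$ to rewrite everything, and finally recognize the resulting sum as the claimed right-hand side. First I would reduce to the case $b_0 = \dots = b_{2k} = 1$: since the cumulant functions are $B$-$B$-bilinear in the appropriate sense and $\eu{QR}_n$ is generated by fixed-point elements, one checks that it suffices to prove the identity after stripping the $b_l$'s (alternatively, carry the $b_l$'s along as inert spectators throughout, since they commute past the $\psi_n$-integration). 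So the goal becomes
\begin{equation*}
\kappa_{E_{\eu{QR}_n}}^{(\pi)}[x_1 \otimes \dots \otimes x_1] = n^{-k}\sum_{\substack{1 \leq i_1,\dots,i_{2k} \leq n\\ \pi \leq \ker \mathbf i}} x_{i_1}\dotsb x_{i_{2k}}.
\end{equation*}

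Next I would compute the $\eu{QR}_n$-valued moments $E_{\eu{QR}_n}[x_{j_1}\dotsb x_{j_m}]$ explicitly. By Proposition \ref{rotcoact} and the definition of $\widetilde\alpha_n$, we have $E_{\eu{QR}_n}[x_{j_1}\dotsb x_{j_m}] = (\mathrm{ev}_x \otimes \psi_n)\alpha_n(t_{j_1}\dotsb t_{j_m}) = \sum_{1\le i_1,\dots,i_m\le n} x_{i_1}\dotsb x_{i_m}\,\psi_n(u_{i_1 j_1}\dotsb u_{i_m j_m})$. Using quantum rotatability, the moment $E_{\eu{QR}_n}[x_{j_1}\dotsb x_{j_m}]$ is in fact independent of $\mathbf j$ in the appropriate averaged sense; more precisely, substituting the Weingarten formula from Remark \ref{haar}(i) gives, for $m = 2k$,
\begin{equation*}
E_{\eu{QR}_n}[x_{j_1}\dotsb x_{j_{2k}}] = \sum_{\substack{\sigma,\tau \in NC_2(2k)\\ \tau \le \ker \mathbf j}} W_{kn}(\sigma,\tau)\sum_{\substack{1\le i_1,\dots,i_{2k}\le n\\ \sigma \le \ker \mathbf i}} x_{i_1}\dotsb x_{i_{2k}},
\end{equation*}
and the odd moments vanish. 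It is convenient to abbreviate $y_\sigma := \sum_{\sigma \le \ker \mathbf i} x_{i_1}\dotsb x_{i_{2k}}$ for $\sigma \in NC_2(2k)$, so the moment indexed by $\mathbf j$ with $\ker \mathbf j \ge \rho$ (for the relevant $\rho \in NC_2(2k)$) equals $\sum_{\sigma,\tau: \tau \le \rho} W_{kn}(\sigma,\tau) y_\sigma$.

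Then I would apply the moment-cumulant inversion $\kappa_{E_{\eu{QR}_n}}^{(\pi)}[x_1\otimes\dots\otimes x_1] = \sum_{\rho \in NC(2k),\, \rho \le \pi} \mu_{2k}(\rho,\pi) E_{\eu{QR}_n}^{(\rho)}[x_1 \otimes \dots \otimes x_1]$. Since the $\pi$ in the statement is a non-crossing \emph{pair} partition, every $\rho \le \pi$ is itself a non-crossing pair partition (each block of $\rho$ sits inside a two-element block of $\pi$, so has size at most two, and size-one blocks would force an odd moment which vanishes — so only $\rho \in NC_2(2k)$ with $\rho \le \pi$ contribute), and the interval $\mu$-values on $NC_2$ are inherited from $NC(2k)$. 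Evaluating $E_{\eu{QR}_n}^{(\rho)}$ means multiplying together the block-moments according to the recursive definition; because all arguments are the single variable $x_1$, the nested block-moments just reassemble into a single moment-type expression indexed by the partition $\rho$, and one gets $E_{\eu{QR}_n}^{(\rho)}[x_1 \otimes \dots \otimes x_1] = \sum_{\sigma,\tau \in NC_2(2k):\, \tau \le \rho} W_{kn}(\sigma,\tau)\, y_\sigma$ — this is the step requiring the most care, unwinding the recursive $\rho^{(\pi)}$-bookkeeping and confirming that it collapses correctly when all inputs coincide. Substituting and interchanging sums,
\begin{equation*}
\kappa_{E_{\eu{QR}_n}}^{(\pi)}[x_1\otimes\dots\otimes x_1] = \sum_{\sigma,\tau \in NC_2(2k)} \Biggl(\sum_{\substack{\rho \in NC_2(2k)\\ \tau \le \rho \le \pi}} \mu_{2k}(\rho,\pi)\Biggr) W_{kn}(\sigma,\tau)\, y_\sigma.
\end{equation*}
By the defining property of the Möbius function on the interval $[\tau,\pi]$, the inner sum is $\delta_{\tau\pi}$ if $\tau \le \pi$ and $0$ otherwise (note $\tau \le \pi$ combined with both being pair partitions of the same set forces $\tau = \pi$, so in fact the inner sum is just $\delta_{\tau\pi}$). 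Hence $\kappa_{E_{\eu{QR}_n}}^{(\pi)}[x_1\otimes\dots\otimes x_1] = \sum_{\sigma \in NC_2(2k)} W_{kn}(\sigma,\pi)\, y_\sigma$. Finally I would use that $W_{kn}$ is the inverse of $G_{kn}$: the claimed right-hand side is $n^{-k} y_\pi$, and one verifies $\sum_\sigma W_{kn}(\sigma,\pi) y_\sigma = n^{-k} y_\pi$ — but this is not literally a matrix identity, so instead I would observe directly that $y_\pi = \sum_{\tau} G_{kn}(\pi,\tau)(\text{something})$...

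Actually, the cleanest finish avoids $y_\sigma$ ambiguity: rather than invert at the level of the formal sums, I would note that the identity $\kappa^{(\pi)} = n^{-k} y_\pi$ is equivalent, via the moment-cumulant formula run \emph{forward}, to the already-established moment formula $E_{\eu{QR}_n}[x_{j_1}\dotsb x_{j_{2k}}] = \sum_{\sigma,\tau:\, \tau\le\ker\mathbf j} W_{kn}(\sigma,\tau) y_\sigma$ — i.e. one checks that setting $\kappa^{(\pi)} := n^{-k}y_\pi$ for $\pi \in NC_2(2k)$ and $\kappa^{(\pi)} := 0$ otherwise and summing $\sum_{\pi \in NC(2k),\, \pi \le \ker\mathbf j}\kappa^{(\pi)}[x_{j_1}\otimes\dots\otimes x_{j_{2k}}]$ reproduces the moment, using $G_{kn} W_{kn} = I$ and $G_{kn}(\pi,\sigma) = n^{|\pi\vee\sigma|}$ together with the combinatorial fact that $\sum_{\pi \le \ker\mathbf j,\, \pi\in NC_2} n^{|\pi\vee\sigma|}$ counts appropriately — and then invoke uniqueness of cumulants (the moment-cumulant formula determines $\kappa^{(n)}$ recursively) to conclude. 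The main obstacle is precisely this last bookkeeping: matching the Weingarten-sum expression for the moments against the moment-cumulant expansion of the candidate cumulants, which amounts to the identity $\sum_{\pi \in NC_2(2k),\, \pi \le \ker\mathbf j} n^{-k} y_\pi \cdot (\text{coefficient})= E_{\eu{QR}_n}[x_{j_1}\dotsb x_{j_{2k}}]$ and reduces to $\sum_{\pi \le \ker\mathbf j} W_{kn}(\sigma,\pi)$-type manipulations; I expect this to follow from the orthogonality relations already exploited in the proof of Proposition \ref{semrot} rather than from any deep new input.
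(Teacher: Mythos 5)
There is a genuine gap, and it sits exactly where you flagged "the step requiring the most care." Your identity $E_{\eu{QR}_n}^{(\rho)}[x_1\otimes\dotsb\otimes x_1]=\sum_{\sigma,\tau\in NC_2(2k),\,\tau\leq\rho}W_{kn}(\sigma,\tau)\,y_\sigma$ is false: it confuses the \emph{nested} moment function $E^{(\rho)}$ (which factors multiplicatively over the blocks of $\rho$, each block contributing its own independent Haar integral) with the single linear moment $E_{\eu{QR}_n}[x_{j_1}\dotsb x_{j_{2k}}]$ for some $\mathbf j$ with $\ker\mathbf j\geq\rho$ (a single Haar integral of a length-$2k$ word in the $u_{ij}$). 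Already for $k=2$ and $\rho=\{\{1,2\},\{3,4\}\}$ one has $E^{(\rho)}[x_1^{\otimes 4}]=E[x_1^2]E[x_1^2]=n^{-2}y_\rho$, whereas your formula gives $\sum_\sigma W_{2n}(\sigma,\rho)y_\sigma$, which contains off-diagonal Weingarten terms. Consequently your conclusion $\kappa^{(\pi)}=\sum_\sigma W_{kn}(\sigma,\pi)y_\sigma$ is not the claimed $n^{-k}y_\pi$, as you yourself suspected. The fix is cheap and makes your route work: for $\pi\in NC_2(2k)$, every $\rho\leq\pi$ other than $\pi$ itself has a singleton block, hence $E^{(\rho)}$ contains a factor $E_{\eu{QR}_n}[x_1 b]=\sum_i x_i b\,\psi_n(u_{i1})=0$; so M\"obius inversion collapses to $\kappa^{(\pi)}=E^{(\pi)}$, and the nested computation of $E^{(\pi)}$ using $E_{\eu{QR}_n}[x_1bx_1]=\frac1n\sum_i x_ibx_i$ gives $n^{-k}y_\pi$ block by block. (The paper does essentially this, but as a direct induction on intervals of $\pi$ applied to the recursive definition of $\kappa^{(\pi)}$, using only $\psi_n(u_{i1})=0$ and $\psi_n(u_{i_11}u_{i_21})=\delta_{i_1i_2}/n$; no Weingarten matrix or M\"obius function is needed.)

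Your proposed "cleanest finish" does not repair this. Declaring candidate cumulants $n^{-k}y_\pi$ for $\pi\in NC_2(2k)$ and $0$ for all other $\pi\in NC(2k)$, and then invoking uniqueness, requires verifying the full moment--cumulant formula for this candidate family; but the non-pair cumulants of $(x_1,\dotsc,x_n)$ do \emph{not} vanish at finite $n$ (this is precisely why Theorem \ref{finrot} is only an $O(1/n)$ approximation and why the example in \ref{finex} exists). Concretely, for $k=2$ and $j_1=\dotsb=j_4=1$ the Weingarten formula gives $E_{\eu{QR}_n}[b_0x_1b_1x_1b_2x_1b_3x_1b_4]=\frac{1}{n(n+1)}(y_{\sigma_1}+y_{\sigma_2})$, while the candidate-cumulant sum gives $n^{-2}(y_{\sigma_1}+y_{\sigma_2})$; these disagree, so the verification you are counting on would fail and the uniqueness argument cannot be run. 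The lemma is a statement about the pair-partition cumulants only, and it must be proved by a computation that isolates them (induction on intervals, or the collapsed M\"obius inversion above), not by postulating that all other cumulants vanish.
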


\begin{proof}
The proof is by induction on $k$.  For $k = 1$, we have 
\begin{align*}
 \kappa_{E_{\eu{QR}_n}}^{(2)}[b_0x_{1}b_1\otimes x_{1}b_2] &= E_{\eu{QR}_n}[b_0x_1b_1x_1b_2] - E_{\eu{QR}_n}[b_0x_1b_1]E_{\eu{QR}_n}[x_1b_2].
\end{align*}
Now
\begin{equation*}
E_{\eu{QR}_n}[b_0x_1b_1] = \sum_{1 \leq i \leq n} b_0x_ib_1 \psi_n(u_{i1}) = 0,
\end{equation*}
so we have
\begin{align*}
 \kappa_{E_{\eu{QR}_n}}^{(\pi)}[b_0x_{1}b_1\otimes x_{1}b_2] &= E_{\eu{QR}_n}[b_0x_1b_1x_1b_2]\\
&= \sum_{1 \leq i_1,i_2 \leq n} b_0x_{i_1}b_1x_{i_2}b_2 \psi_n(u_{i_11}u_{i_21})\\
&= \frac{1}{n}\sum_{1 \leq i \leq n} b_0x_ib_1x_ib_2.
\end{align*}
If $k > 1$, let $V = \{l,l+1\}$ be an interval of $\pi$.  Then
\begin{multline*}
 n^{-k} \sum_{\substack{1 \leq i_1,\dotsc,i_{2k} \leq n\\ \pi \leq \ker \mathbf i}} b_0x_{i_1}\dotsb x_{i_{2k}}b_{2k}\\
 =  n^{-(k-1)} \sum_{\substack{1 \leq i_1,\dotsc,i_{l-1},i_{l+2},i_{2k} \leq n\\ (\pi \setminus V) \leq \ker \mathbf i}} b_0x_{i_1}\dotsb b_{l-1}\biggl(\frac{1}{n}\sum_{i=1}^n x_{i}b_lx_i\biggr)b_{l+1}\dotsb x_{i_{2k}}b_{2k}\\
= n^{-(k-1})\sum_{\substack{1 \leq i_1,\dotsc,i_{l-1},i_{l+2},i_{2k} \leq n\\ (\pi \setminus V) \leq \ker \mathbf i}} b_0x_{i_1}\dotsb b_{l-1}\kappa_{E_{\eu{QR}_n}}^{(2)}[x_1b_l\otimes x_1]b_{l+1}\dotsb x_{i_{2k}}b_{2k},
\end{multline*}
which by induction is equal to
\begin{equation*}
\kappa_{E_{\eu{QR}_n}}^{(\pi\setminus V)}[b_0x_{1}b_1 \otimes \dotsb \otimes x_{1}b_{l-1}\kappa_{E_{\eu{QR}_n}}^{(2)}[x_1b_l\otimes x_1]b_{l+1} \otimes \dotsb \otimes x_{1}b_{2k}].
\end{equation*}
But by definition this is equal to
\begin{equation*}
 \kappa_{E{\eu{QR}_n}}^{(\pi)}[b_0x_{1}b_1 \otimes \dotsb \otimes x_{1}b_{2k}],
\end{equation*}
and the result follows by induction.

\end{proof}

\begin{proof}[Proof of Theorem \ref{finrot}]
Let $(x_1,\dotsc,x_n)$ be a quantum rotatable sequence in the W$^*$-probability space $(M,\varphi)$.  Let $s_1,\dotsc,s_n$ be a $\eu{QR}_n$-valued free centered semicircular family with common variance $\eta:\eu{QR}_n \to \eu{QR}_n$ defined by
\begin{equation*}
 \eta(b) = E_{\eu{QR}_n}[x_1bx_1]
\end{equation*}
for $b \in \eu{QR}_n$.  

Let $1 \leq j_1,\dotsc,j_{2k} \leq n$, and $b_0,\dotsc,b_{2k} \in \eu{QR}_n$ with $\|b_i\| \leq 1$ for $0 \leq i \leq 2k$.  It is easily seen by induction that if $\pi \in NC_2(2k)$, $\pi \leq \ker \mathbf j$ then
\begin{equation*}
 \kappa_{E_{\eu{QR}_n}}^{(\pi)}[b_0s_{j_1}b_1 \otimes \dotsb \otimes s_{j_{2k}}b_{2k}] = \kappa_{E_{\eu{QR}_n}}^{(\pi)}[b_0 x_1b_1 \otimes \dotsb \otimes x_1b_{2k}].
\end{equation*}
It follows from Lemma \ref{rotcum} that 
\begin{align*}
 E_{\eu{QR}_n}[b_0s_{j_1}\dotsb s_{j_{2k}}b_{2k}] &= \sum_{ \substack{\pi \in NC_2(2k)\\ \pi \leq \ker \mathbf j}} \kappa_{E_{\eu{QR}_n}}^{(\pi)}[b_0s_{j_1}b_1 \otimes \dotsb \otimes s_{j_{2k}}b_{2k}]\\
&= \sum_{\substack{\pi \in NC_2(2k)\\ \pi \leq \ker \mathbf j}} \kappa_{E_{\eu{QR}_n}}^{(\pi)}[b_0x_1b_1 \otimes \dotsb \otimes x_{1}b_{2k}]\\
&= \sum_{\substack{\pi \in NC_2(2k)\\ \pi \leq \ker \mathbf j}} n^{-k}\sum_{\substack{1 \leq i_1,\dots,i_{2k} \leq n\\ \pi \leq \ker \mathbf i}} b_0x_{i_1}b_1\dotsb x_{i_{2k}}b_{2k}.
\end{align*}

On the other hand, we have
\begin{align*}
 E_{\eu{QR}_n} [b_0x_{j_1}\dotsb x_{j_{2k}}b_{2k}] &= \sum_{1 \leq i_1,\dotsb,i_{2k} \leq n} b_0x_{i_1}\dotsb x_{i_{2k}}b_{2k}\psi_n(u_{i_1j_1}\dotsb u_{i_{2k}j_{2k}})\\
&= \sum_{1 \leq i_1,\dotsb,i_{2k} \leq n} b_0x_{i_1}\dotsb x_{i_{2k}}b_{2k}\sum_{\substack{\pi,\sigma \in NC_2(2k)\\ \pi \leq \ker \mathbf i\\ \sigma \leq \ker \mathbf j}} W_{kn}(\pi,\sigma)\\
&= \sum_{\substack{\pi,\sigma \in NC_2(2k)\\ \sigma \leq \ker \mathbf j}} W_{kn}(\pi,\sigma) \sum_{\substack{1 \leq i_1,\dotsc, i_{2k} \leq n\\ \pi \leq \ker \mathbf i}} b_0x_{i_1}\dotsb x_{i_{2k}}b_{2k}.
\end{align*}
Since $x_1,\dotsc,x_n$ are identically distributed with respect to the faithful state $\varphi$, it follows that $\|x_1\| = \dotsb = \|x_n\|$.  So for any $\pi \in NC_2(2k)$,
\begin{equation*}
 \biggl\|\sum_{\substack{1 \leq i_1,\dotsc,i_{2k} \leq n\\ \pi \leq \ker \mathbf i}} b_0 x_{i_1}\dotsb x_{i_{2k}}b_{2k} \biggr\| \leq n^k \|x_1\|^{2k}.
\end{equation*}
Combining this with the equation above, we find that
\begin{multline*}
\biggl\|E_{\eu{QR}_n}[b_0x_{j_1}\dotsb x_{j_{2k}}b_{2k}] - E_{\eu{QR}_n}[b_0s_{j_1}\dotsb s_{j_{2k}}b_{2k}] \biggr\| =\\ \biggl\|\sum_{\substack{\pi,\sigma \in NC_2(2k)\\ \sigma \leq \ker \mathbf j}} (W_{kn}(\pi,\sigma) - \delta_{\pi\sigma}n^{-k})\sum_{\substack{1 \leq i_1,\dotsc,i_{2k} \leq n \\ \pi \leq \ker \mathbf i}} b_0x_{i_1}\dotsb x_{i_{2k}}b_{2k}\biggr\|\\
\leq  \sum_{\substack{\pi,\sigma \in NC_2(2k)}} |W_{kn}(\pi,\sigma)n^{k} - \delta_{\pi\sigma}|\|x_1\|^{2k} 
\end{multline*}
Setting
\begin{equation*}
D_k = \sup_{n \in \N} \; n\cdot \negthickspace\negthickspace\negthickspace\negthickspace\negthickspace  \sum_{\pi,\sigma \in NC_2(2k)} |W_{kn}(\pi,\sigma)n^{k} - \delta_{\pi\sigma}|,
\end{equation*}
which is finite by the asymptotic estimate in \ref{haar}, proves the estimate for the even moments.  For the odd moments, let $1 \leq i_1,\dotsc,i_{2k+1} \leq n$ and $b_0,\dotsc,b_{2k+1} \in \eu{QR}_n$, then
\begin{multline*}
 E_{\eu{QR}_n}[b_0x_{i_1}\dotsb x_{i_{2k+1}}b_{2k+1}]\\ = \sum_{1 \leq i_1,\dotsc,i_{2k+1} \leq n} b_0x_{i_1}\dotsb x_{i_{2k+1}}b_{2k+1}\psi_n(u_{i_1j_1}\dotsb u_{i_{2k+1}j_{2k+1}})
\end{multline*}
is equal to zero by \ref{haar}.

\end{proof}

\section{Infinite quantum rotatable sequences}

\begin{defn}
An infinite sequence $(x_i)_{i \in \N}$ of self-adjoint random variables in a noncommutative probability space $(A,\varphi)$ is called \textit{quantum rotatable} or \textit{quantum orthogonally invariant} if $(x_1,\dotsc,x_n)$ is quantum rotatable for each $n \in \N$.
\end{defn}

\begin{rmk}
This definition is equivalent to the statement that for each $n \in \N$ the joint distribution of $(x_1,\dotsc,x_n)$ is invariant under the coaction $\alpha_n$ of $\mc A_o(n)$ on $\ms P_n$ as defined in the previous section.  It will be convenient to extend these coactions to $\ms P_\infty$.
\end{rmk}

\begin{rmk}
Let $\beta_n:\ms P_\infty \to \ms P_\infty \otimes \mc A_o(n)$ be the unique unital homomorphism determined by
\begin{equation*}
 \beta_n(t_j) = \begin{cases}\sum_{i=1}^n t_i \otimes u_{ij}, & 1 \leq j \leq n\\ t_j \otimes 1, & j > n\end{cases}.
\end{equation*}
Then $\beta_n$ is a right coaction of $\mc A_o(n)$ on $\ms P_\infty$. Moreover, these coactions are compatible in the sense that
\begin{align*}
 (\mathrm{id} \otimes \omega_n) \circ \beta_{n+1} &= \beta_n \\
\intertext{and}
 (\iota_n \otimes \mathrm{id}) \circ \alpha_n &= \beta_n \circ \iota_n.
\end{align*}
where $\iota_n:\ms P_n \to \ms P_\infty$ is the obvious inclusion and $\omega_n:A_o(n+1) \to A_o(n)$ is the unique unital $*$-homomorphism, given by the universal property of $A_o(n+1)$, such that
\begin{equation*}
 \omega_n(u_{ij}) = \begin{cases} u_{ij}, & 1 \leq i,j \leq n\\ \delta_{ij}1_{A_o(n)}, & \max \{i,j\} = n+1\end{cases}.
\end{equation*}
\end{rmk}

\begin{prop}
An infinite sequence $(x_i)_{i \in \N}$ of self-adjoint elements in a noncommutative probability space $(A,\varphi)$ is quantum rotatable if and only if $\varphi_x$ is invariant under the coactions $\beta_n$ for each $n \in \N$.
\end{prop}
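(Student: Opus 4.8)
The plan is to reduce everything to the two compatibility relations stated in the preceding remark, together with their iterates, so that the argument becomes a short diagram chase. First I would set $\omega_{n,m} := \omega_n \circ \omega_{n+1} \circ \dotsb \circ \omega_{m-1} : A_o(m) \to A_o(n)$ for $m \geq n$ (a unital $*$-homomorphism, being a composition of such, with $\omega_{n,n} = \mathrm{id}$). Iterating the relation $(\mathrm{id} \otimes \omega_n) \circ \beta_{n+1} = \beta_n$ gives $(\mathrm{id} \otimes \omega_{n,m}) \circ \beta_m = \beta_n$ on $\ms P_\infty$; composing this with $\iota_m$ and substituting $(\iota_m \otimes \mathrm{id}) \circ \alpha_m = \beta_m \circ \iota_m$ yields the single identity
\[ \beta_n \circ \iota_m \;=\; (\iota_m \otimes \omega_{n,m}) \circ \alpha_m \qquad (m \geq n), \]
which is all that will be needed. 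I would also record two trivialities: $\varphi_x \circ \iota_m$ is the joint distribution of $(x_1,\dotsc,x_m)$ (because $\varphi$ restricts to the joint distributions of subfamilies), still written $\varphi_x$; and slicing intertwines with $\omega_{n,m}$, i.e. $\varphi_x \otimes \omega_{n,m} = \omega_{n,m} \circ (\varphi_x \otimes \mathrm{id})$ on $\ms P_m \otimes A_o(m)$.

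For the implication "$\Leftarrow$", I would fix $n$, take $p \in \ms P_n$, apply $\beta_n$-invariance of $\varphi_x$ to $\iota_n(p) \in \ms P_\infty$, and use the displayed identity with $m = n$ (so $\omega_{n,n} = \mathrm{id}$, i.e. $\beta_n \circ \iota_n = (\iota_n \otimes \mathrm{id}) \circ \alpha_n$), obtaining
\[ (\varphi_x \otimes \mathrm{id})\alpha_n(p) = (\varphi_x \otimes \mathrm{id})\beta_n(\iota_n(p)) = \varphi_x(\iota_n(p))\,1_{A_o(n)} = \varphi_x(p)\,1_{A_o(n)}, \]
so $(x_1,\dotsc,x_n)$ is quantum rotatable; as $n$ is arbitrary, so is $(x_i)_{i \in \N}$.

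For "$\Rightarrow$", I would fix $n$, take an arbitrary $p \in \ms P_\infty$, and choose $m \geq n$ large enough that $p = \iota_m(p')$ for some $p' \in \ms P_m$ (possible since $p$ involves only finitely many generators). Then, using the displayed identity, the intertwining of slicing, quantum rotatability of $(x_1,\dotsc,x_m)$, and unitality of $\omega_{n,m}$,
\[ (\varphi_x \otimes \mathrm{id})\beta_n(p) = (\varphi_x \otimes \omega_{n,m})\alpha_m(p') = \omega_{n,m}\bigl((\varphi_x \otimes \mathrm{id})\alpha_m(p')\bigr) = \omega_{n,m}\bigl(\varphi_x(p')\,1_{A_o(m)}\bigr) = \varphi_x(p)\,1_{A_o(n)}, \]
so $\varphi_x$ is $\beta_n$-invariant for every $n$.

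I expect no genuine obstacle: the only point needing attention is the bookkeeping in "$\Rightarrow$", where $\beta_n$-invariance on $\ms P_\infty$ concerns arbitrarily long words, including those mixing indices $\leq n$ with indices $> n$, whereas quantum rotatability of $(x_1,\dotsc,x_n)$ only constrains words in $t_1,\dotsc,t_n$; this is handled by absorbing each word into a large enough finite block $\ms P_m$ and transporting the invariance back through $\omega_{n,m}$. Everything else is formal once the identity $\beta_n \circ \iota_m = (\iota_m \otimes \omega_{n,m}) \circ \alpha_m$ is in hand, which is itself an immediate iteration of the relations already recorded.
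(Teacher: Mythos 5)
Your proposal is correct and takes essentially the same route as the paper: both directions rest on the compatibility relations $(\mathrm{id}\otimes\omega_n)\circ\beta_{n+1}=\beta_n$ and $(\iota_n\otimes\mathrm{id})\circ\alpha_n=\beta_n\circ\iota_n$, with a general $p\in\ms P_\infty$ absorbed into a large enough $\ms P_m$ and the invariance transported back down through the $\omega$'s. Your packaging of the two relations into the single identity $\beta_n\circ\iota_m=(\iota_m\otimes\omega_{n,m})\circ\alpha_m$ is just a compressed form of the paper's two-step argument (descent of $\beta$-invariance plus lifting of $\alpha_n$-invariance), so there is nothing further to add.
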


\begin{proof}
Let $\varphi_x^{(n)}:\ms P_n \to \C$ denote the joint distribution of $(x_1,\dotsc,x_n)$.  We have
\begin{align*}
 (\varphi_x^{(n)} \otimes \mathrm{id}) \circ \alpha_n &= (\varphi_x \circ \iota_n \otimes \mathrm{id}) \circ \alpha_n\\
&= (\varphi_x \otimes \mathrm{id}) \circ \beta_n \circ \iota_n,
\end{align*}
from which it follows that if $\varphi_x$ is invariant under $\beta_n$ then $(x_1,\dotsc,x_n)$ is quantum rotatable.  

For the converse, we note that if $\varphi_x$ is invariant under $\beta_n$ then it is invariant under $\beta_m$ for $m \leq n$.  Indeed it suffices to show that it is invariant under $\beta_{n-1}$.  Let $p \in \ms P_\infty$, then
\begin{align*}
 (\varphi_x \otimes \mathrm{id})\beta_{n-1}(p) &= (\varphi_x \otimes \mathrm{id})(\mathrm{id} \otimes \omega_{n-1}) \beta_n(p)\\
&= (\mathrm{id} \otimes \omega_{n-1})(\varphi_x(p) \otimes 1_{\mc A_o(n)})\\
&= \varphi_x(p)1_{\mc A_o(n-1)}.
\end{align*}

Now suppose that $\varphi_x^{(n)}$ is invariant under $\alpha_n$ for each $n \in \N$.  Let $m \in \N$ and $p \in \ms P_\infty$, then $p = \iota_n(p')$ for some $p' \in \ms P_n$, $n \geq m$.  We then have
\begin{align*}
 (\varphi_x \otimes \mathrm{id}) \beta_n(p) &= (\varphi_x^{(n)} \otimes \mathrm{id}) \alpha_n(p')\\
&= \varphi_x(p)1_{\mc A_o(n)}.
\end{align*}

\end{proof}

\begin{rmk}
Throughout the rest of the section, $(M,\varphi)$ will be a W$^*$-probability space, and $(x_i)_{i \in \N}$ a sequence of self-adjoint random variables in $M$.  $M_\infty$ will denote the von Neumann algebra generated by $\{x_i:i \in \N\}$.  By a slight abuse of notation, we denote
\begin{equation*}
 \eu{QR}_n = \mathrm{W}^*(\{p(x): p \in \ms P_\infty^{\beta_n}\})
\end{equation*}
where $\ms P_\infty^{\beta_n}$ denotes the fixed point algebra of the coaction $\beta_n$.  Since
\begin{equation*}
 (\textrm{id} \otimes \omega_n) \circ \beta_{n+1} = \beta_n,
\end{equation*}
it follows that $\eu{QR}_{n+1} \subset \eu{QR}_n$ for all $n \geq 1$.  We then define
\begin{equation*}
 \eu{QR} = \bigcap_{n \geq 1} \eu{QR}_n.
\end{equation*}
\end{rmk}

\begin{rmk}
If $(x_i)_{i \in \N}$ is quantum rotatable, then it follows as in Proposition \ref{rotcoact} that for each $n \in \N$ the coaction $\beta_n$ lifts to a right coaction $\widetilde \beta_n:M_\infty \to M_\infty \otimes \frk A_o(n)$ of the Hopf von Neumann algebra $\frk A_o(n)$ on $M_\infty$ determined by
\begin{equation*}
 \widetilde \beta_n(p(x)) = (\mathrm{ev}_x \otimes \pi_{\psi_n}) \beta_n(p)
\end{equation*}
for $p \in \ms P_\infty$, and moreover the fixed point algebra of $\widetilde \beta_n$ is $\eu{QR}_n$.  For each $n \in \N$, there is a $\varphi$-preserving conditional expectation $E_{\eu{QR}_n}$ of $M_\infty$ onto $\eu{QR}_n$ given by integrating $\beta_n$, i.e.
\begin{equation*}
 E_{\eu{QR}_n}[m] = (\mathrm{id} \otimes \psi_n)\beta_n(m)
\end{equation*}
for $m \in M_\infty$.  As the next proposition shows, we may obtain a $\varphi$-preserving conditional expectation onto $\eu{QR}$ by taking the limit as $n$ goes to infinity.  Since we will need a similar result in the quantum unitary case, we will give a more general statement.  The proof is the same as \cite[Proposition 5.7]{qexcalg}, but is included for the convenience of the reader.
\end{rmk}

\begin{prop}\label{explim}
Let $(M,\varphi)$ be a W$^*$-probability space, and for each $n \in \N$ let $1 \in B_n \subset M$ be a W$^*$-subalgebra.  Suppose that $B_{n+1} \subset B_n$ for each $n \in \N$ and set
\begin{equation*}
 B = \bigcap_{n \geq 1} B_n.
\end{equation*}
Suppose further that for each $n \in \N$, there is a $\varphi$-preserving conditional expectation $E_n:M \to B_n$. Then 
\begin{enumerate}
\item For any $m \in M$, the sequence $E_n[m]$ converges in $|\;|_2$ and the strong topology to a limit $E[m]$ in $B$.  Moreover, $E$ is a $\varphi$-preserving conditional expectation of $M$ onto $B$.
\item Fix $\pi \in NC(k)$ and $m_1,\dotsc,m_k \in M$, then
\begin{equation*}
 E^{(\pi)}[m_1 \otimes \dotsb \otimes m_k] = \lim_{n \to \infty} E_{n}^{(\pi)}[m_1 \otimes \dotsb \otimes m_k],
\end{equation*}
with convergence in the strong topology.
\end{enumerate}

\end{prop}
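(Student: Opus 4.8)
The plan is to prove part (i) first and then deduce part (ii) from it. For part (i), the key observations are that the $B_n$ form a decreasing tower of W$^*$-subalgebras with conditional expectations $E_n : M \to B_n$ that all preserve the faithful normal state $\varphi$, and that a $\varphi$-preserving conditional expectation onto $B_n$ is unique (it is the restriction to $M$ of the orthogonal projection in $L^2(M,\varphi)$ onto the closure of $B_n$). Hence for $n \le m$ we have $E_n \circ E_m = E_n$, since $E_n \circ E_m$ is a $\varphi$-preserving conditional expectation onto $B_n$ (it fixes $B_n \subset B_m$, is $B_n$-bimodular, and preserves $\varphi$). In the Hilbert space $L^2(M,\varphi)$, let $e_n$ be the projection onto $\overline{B_n}^{|\cdot|_2}$; the relation $E_n E_m = E_n$ says the $e_n$ are a decreasing sequence of projections, so $e_n \downarrow e$ strongly, where $e$ is the projection onto $\bigcap_n \overline{B_n}^{|\cdot|_2}$. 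Thus for each $m \in M$, $E_n[m] = e_n \hat m \to e\hat m$ in $|\cdot|_2$; one checks the limit lies in $B = \bigcap_n B_n$ (it is in every $B_n$ since $B_n$ is $|\cdot|_2$-closed in $L^2$, being the range of the normal conditional expectation $E_n$, and the tail of the sequence lies in $B_n$), and the resulting map $E : M \to B$ is linear, unital, $\varphi$-preserving, $B$-bimodular (each $E_n$ is $B_n$-bimodular hence $B$-bimodular, and bimodularity passes to the $|\cdot|_2$-limit using boundedness of multiplication by a fixed element of $B$), and positive (each $E_n$ is positive and positivity is preserved under $|\cdot|_2$-limits of a bounded net — here $\|E_n[m]\| \le \|m\|$ uniformly). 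Upgrading $|\cdot|_2$-convergence to strong convergence on bounded sets is routine: the $E_n[m]$ are uniformly bounded in norm by $\|m\|$, and on norm-bounded sets the $|\cdot|_2$-topology and the strong operator topology coincide.

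For part (ii), I would argue by induction on the number of blocks of $\pi$, using the recursive definition of $E^{(\pi)}$ and $E_n^{(\pi)}$. If $\pi$ has one block then $E^{(\pi)}[m_1\otimes\dots\otimes m_k] = E[m_1\cdots m_k] = \lim_n E_n[m_1\cdots m_k] = \lim_n E_n^{(k)}[m_1\otimes\dots\otimes m_k]$ by part (i). For the inductive step, pick an interval block $V = \{l+1,\dots,l+s\}$ of $\pi$; then
\[
 E^{(\pi)}[m_1\otimes\dots\otimes m_k] = E^{(\pi\setminus V)}\bigl[m_1\otimes\dots\otimes m_l E^{(s)}(m_{l+1}\otimes\dots\otimes m_{l+s})\otimes\dots\otimes m_k\bigr],
\]
and similarly for $E_n^{(\pi)}$ with $E_n^{(s)}$ in the inner slot. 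By part (i), $E_n^{(s)}(m_{l+1}\otimes\dots\otimes m_{l+s}) = E_n[m_{l+1}\cdots m_{l+s}] \to E[m_{l+1}\cdots m_{l+s}]$ strongly, and these elements are uniformly bounded in norm by $\|m_{l+1}\|\cdots\|m_{l+s}\|$. So the inner argument of the $(\pi\setminus V)$-map converges strongly, along a norm-bounded sequence, to the corresponding argument for $E^{(\pi\setminus V)}$. It then remains to see that $E_n^{(\pi\setminus V)}$, applied to a strongly convergent norm-bounded sequence of argument tuples, converges strongly to $E^{(\pi\setminus V)}$ applied to the limit tuple; this follows from the inductive hypothesis together with the continuity of $E_n^{(\pi\setminus V)}$ in each argument on bounded sets (a finite composition of multiplications and the contractive maps $E_n[\,\cdot\,]$, all jointly strongly continuous on norm-bounded sets) — and a diagonal/uniformity estimate bounding $\|E_n^{(\pi\setminus V)}[\text{tuple}_n] - E^{(\pi\setminus V)}[\text{tuple}_\infty]\|_2$ by $\|E_n^{(\pi\setminus V)}[\text{tuple}_n] - E_n^{(\pi\setminus V)}[\text{tuple}_\infty]\|_2 + \|E_n^{(\pi\setminus V)}[\text{tuple}_\infty] - E^{(\pi\setminus V)}[\text{tuple}_\infty]\|_2$, the first term going to $0$ by strong continuity on bounded sets and the second by the inductive hypothesis.

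The main obstacle is purely bookkeeping rather than conceptual: it is the interplay of the two modes of convergence. One must consistently work on norm-bounded sets — where $|\cdot|_2$-convergence and strong convergence agree and where multiplication and the conditional expectations are jointly continuous — and one must be careful that in the inductive step the inner slot of $E_n^{(\pi\setminus V)}$ is a \emph{moving} target (it depends on $n$), so a one-variable continuity statement does not immediately suffice; the triangle-inequality split above is what handles this. Since all the maps involved are contractive or norm-bounded by fixed constants depending only on $\|m_1\|,\dots,\|m_k\|$, no uniformity issue actually arises, and the argument goes through. (As noted in the excerpt, this is the same argument as \cite[Proposition 5.7]{qexcalg}.)
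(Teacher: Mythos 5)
Your argument is correct and is essentially the paper's own proof: both parts hinge on identifying each $\varphi$-preserving expectation $E_n$ with the decreasing Jones projections $P_n$ on $L^2(M,\varphi)$, so that $P_n\downarrow P$ gives part (i), and part (ii) follows from joint strong continuity of multiplication on norm-bounded sets. The only (cosmetic) difference is in part (ii), where the paper writes $E_n^{(\pi)}[m_1\otimes\dotsb\otimes m_k]$ directly as a single word in the $m_i$ and $P_n$ and applies joint continuity once, whereas you run an induction on the recursive definition with a triangle-inequality splitting -- same content, slightly more bookkeeping.
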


\begin{proof}
Let $\phi_n = \varphi|_{B_n}$ and let $L^2(B_n,\phi_n)$ denote the GNS Hilbert space, which can be viewed as a closed subspace of $L^2(M,\varphi)$.  Let $P_n \in \mc B(L^2(M,\varphi))$ be the orthogonal projection onto $L^2(B_n,\phi_n)$.  Since $E_{n}:M \to B_n$ is a conditional expectation such that $\phi_n \circ E_{n} = \varphi$, it follows (see e.g. \cite[Proposition II.6.10.7]{blackadar}) that 
\begin{equation*}
 E_{\eu{QE}_n}[m] = P_n m P_n
\end{equation*}
for $m \in M$.  Since $P_n$ converges strongly as $n \to \infty$ to $P$, where
\begin{equation*}
 P = \bigwedge_{n \geq 1} P_n
\end{equation*}
is the orthogonal projection onto $L^2(B,\varphi|_{B})$, it follows that 
\begin{equation*}
 E_{n}[m] \to PmP
\end{equation*}
in $|\;|_2$ and the strong operator topology as $n \to \infty$.  Set $E[m] = PmP$, then since $E_{n}[m]$ converges strongly to $E[m]$ it follows that $E[m] \in B$, and it is then easy to see that $E$ is a $\varphi$-preserving conditional expectation.

To prove (ii), observe that if $\pi \in NC(k)$ and $m_1,\dotsc,m_k \in M$, then $E_{n}^{(\pi)}[m_1 \otimes \dotsb \otimes m_k]$ is a word in $m_1,\dotsc,m_k$ and $P_n$.  For example, if 
\begin{equation*} 
\pi = \{\{1,10\},\{2,5,6\},\{3,4\}, \{7,8,9\}\} \in NC(10),
\end{equation*}
\begin{equation*}
 \setlength{\unitlength}{0.6cm} \begin{picture}(9,4)\thicklines \put(0,0){\line(0,1){3}}
\put(0,0){\line(1,0){9}} \put(9,0){\line(0,1){3}} \put(8,1){\line(0,1){2}} \put(7,1){\line(0,1){2}}
\put(6,1){\line(0,1){2}} \put(6,1){\line(1,0){2}}
\put(1,1){\line(1,0){4}} \put(1,1){\line(0,1){2}} 
\put(2,2){\line(1,0){1}} \put(2,2){\line(0,1){1}} \put(3,2){\line(0,1){1}} 
\put(5,1){\line(0,1){2}} \put(4,1){\line(0,1){2}}
\put(-0.1,3.3){1} \put(0.9,3.3){2} \put(1.9,3.3){3}
\put(2.9,3.3){4} \put(3.9,3.3){5} \put(4.9,3.3){6} \put(5.9,3.3){7} \put(6.9,3.3){8}
\put(7.9,3.3){9} \put(8.7,3.3){10}
\end{picture}
\end{equation*}
then the corresponding expression is
\begin{equation*}
 E_{n}^{(\pi)}[m_1 \otimes \dotsb \otimes m_{10}] =  P_nm_1Pm_2Pm_3m_4P_nm_5m_6P_nm_7m_8m_9P_nm_{10}P_n.
\end{equation*}
Since multiplication is jointly continuous on bounded sets in the strong topology, this converges as $n$ goes to infinity to the expression obtained by replacing $P_n$ by $P$, which is exactly $E^{(\pi)}[m_1 \otimes \dotsb \otimes m_k]$.

\end{proof}

\begin{rmk}
With these preparations we pass to the proof of Theorem \ref{infrot}.
\end{rmk}

\begin{proof}[Proof of Theorem \ref{infrot}]
The implication (ii) $\Rightarrow$ (i) follows from Proposition \ref{semrot}.  Let $(x_i)_{i \in \N}$ be a quantum rotatable sequence in the W$^*$-probability space $(M,\varphi)$.  Let $j_1,\dotsc,j_{2k} \in \N$ and $b_0,\dotsc,b_{2k} \in \eu{QR}$.  As in the proof of Theorem \ref{finrot}, we have
\begin{align*}
 E_{\eu{QR}}[b_0x_{j_1}\dotsb x_{j_{2k}}b_{2k}] &= \lim_{n \to \infty} E_{\eu{QR}_n}[b_0x_{j_1}\dotsb x_{j_{2k}}b_{2k}]\\
&= \lim_{n \to \infty}\sum_{\substack{\pi,\sigma \in NC_2(2k)\\ \sigma \leq \ker \mathbf j}} W_{kn}(\pi,\sigma) \sum_{\substack{1 \leq i_1,\dotsc,i_{2k} \leq n\\ \pi \leq \ker \mathbf i}}b_0x_{i_1}\dotsb x_{i_{2k}}b_{2k},
\end{align*}
with convergence in the strong topology.  Moreover, for any $\pi,\sigma \in NC_2(2k)$
\begin{equation*}
 \lim_{n \to \infty} |W_{kn}(\pi,\sigma)-\delta_{\pi,\sigma}n^{-k}|\sum_{\substack{1 \leq i_1,\dotsc,i_{2k} \leq n\\ \pi \leq \ker \mathbf i}} \|b_0x_{i_1}\dotsb x_{i_{2k}}b_{2k}\| = 0,
\end{equation*}
from which it follows that 
\begin{equation*}
  E_{\eu{QR}}[b_0x_{j_1}\dotsb x_{j_{2k}}b_{2k}] = \lim_{n \to \infty} \sum_{ \substack{\pi \in NC_2(2k)\\ \pi \leq \ker \mathbf j}} n^{-k}\sum_{\substack{1 \leq i_1,\dotsc,i_{2k}\leq n\\ \pi \leq \ker \mathbf i}} b_0x_{i_1}\dotsb x_{i_{2k}}b_{2k}.
\end{equation*}
By Lemma \ref{rotcum}, for $\pi \in NC_2(2k)$,  we have
\begin{equation*}
 n^{-k}\sum_{\substack{1 \leq i_1,\dotsc,i_{2k} \leq n\\ \pi \leq \ker \mathbf i}} b_0x_{i_1}\dotsb x_{i_{2k}}b_{2k} = \kappa_{E_{\eu{QR}_n}}^{(\pi)}[b_0x_{1}b_1 \otimes \dotsb \otimes x_{1}b_{2k}].
\end{equation*}
By Proposition \ref{explim},
\begin{equation*}
 \lim_{n \to \infty} \kappa_{E_{\eu{QR}_n}}^{(\pi)}[b_0x_{1}b_1 \otimes \dotsb \otimes x_{1}b_{2k}] = \kappa_{E_{\eu{QR}}}^{(\pi)}[b_0x_{1}b_1 \otimes \dotsb \otimes x_{1}b_{2k}].
\end{equation*}
Plugging this in above, we have
\begin{equation*}
 E_{\eu{QR}}[b_0x_{j_1}\dotsb x_{j_{2k}}b_{2k} ] = \sum_{ \substack{\pi \in NC_2(2k)\\ \pi \leq \ker \mathbf j}} \kappa_{E_{\eu{QR}}}^{(\pi)}[b_0x_{1}b_1 \otimes \dotsb \otimes x_{1}b_{2k}].
\end{equation*}
It follows from Theorem \ref{finrot} that the odd moments
\begin{equation*}
 E_{\eu{QR}}[b_0x_{j_1}\dotsb x_{j_{2k+1}}b_{2k+1}]
\end{equation*}
are zero for any $j_1,\dotsc,j_{2k+1} \in \N$ and $b_0,\dotsc,b_{2k+1} \in \eu{QR}$, and the result now follows from Proposition \ref{semcircexp}.
\end{proof}

\begin{rmk}\label{finex}
We will now give an example which demonstrates that Theorem \ref{infrot} fails for finite sequences.  Consider the sequence $x_j = \pi(u_{1j})$ for $1 \leq j \leq n$ in the W$^*$-probability space $(\frk A_o(n),\psi_n)$.  That the sequence is quantum rotatable is simply the invariance condition of the Haar state $\psi_n$.  We will show that $(x_1,\dotsc,x_n)$ is not freely independent and identically distributed with respect to any $\psi_n$-preserving conditional expectation $E$.  Suppose that it were.  The orthogonality relation in $A_o(n)$ gives
\begin{equation*}
 \sum_{i=1}^n x_i^2 = 1,
\end{equation*}
which implies that $E[x_i^2] = 1/n$ for $1 \leq i \leq n$.  Squaring this relation and applying $\psi$ gives
\begin{align*}
 \sum_{1 \leq i,j \leq n} E[x_i^2 x_j^2] = 1.
\end{align*}
Since $(x_1,\dotsc,x_n)$ are assumed to be free and identically distributed with respect to $E$, this becomes
\begin{equation*}
 n(n-1)E[x_1^2]^2 + nE[x_1^4] = 1,
\end{equation*}
from which it follows that
\begin{equation*}
 E[x_1^4] = \frac{1}{n^2}.
\end{equation*}
Applying $\psi_n$, we find
\begin{equation*}
 \psi_n(x_1^4) = \frac{1}{n^2} = \psi_n(x_1^2)^2.
\end{equation*}
Since $x_1^2$ is positive and $\psi_n$ is faithful, this implies $x_1^2 = \frac{1}{n}$ which is absurd.  So $(x_1,\dotsc,x_n)$ are not freely independent and identically distributed with respect to a $\psi_n$-preserving conditional expectation.
\end{rmk}

\section{Quantum unitary invariance}

\noindent In this section we define quantum unitary invariance for a sequence of noncommutative random variables, and prove Theorem \ref{infunit}.  The approach is similar to the quantum orthogonal case, and some details are left to the reader.
\begin{rmk}
Let $\beta_n:\ms Q_\infty \to \ms Q_\infty \otimes \mc A_u(n)$ be the unique unital $*$-homomorphism determined by
\begin{equation*}
 \beta_n(t_j) = \begin{cases} 
\sum_{i=1}^n t_i \otimes v_{ij}, & 1 \leq j \leq n\\
t_j \otimes 1, & j > n
\end{cases}.
\end{equation*}
It is easily seen that $\beta_n$ is a right coaction of the Hopf $*$-algebra $\mc A_u(n)$ on $\ms Q_n$.
\end{rmk}

\begin{defn}
If $(x_i)_{i \in \N}$ is a sequence of (not necessarily self-adjoint) random variables in a noncommutative probability space $(A,\varphi)$, we say that $\varphi_x$ is \textit{invariant under quantum unitary transformations}, or that the sequence is \textit{quantum unitarily invariant}, if $\varphi_x$ is invariant under $\beta_n$ for every $n \in \N$.
\end{defn}

\begin{rmk}
First we will show that operator-valued free centered circular families with common variance are quantum unitarily invariant.
\end{rmk}

\begin{prop}\label{circunit}
Let $A$ be a unital algebra, $1 \in B \subset A$ a $*$-subalgebra and $E:A \to B$ a conditional expectation which preserves $\varphi$.  Suppose that $(c_i)_{i \in \N}$ is a $B$-valued free centered circular family with common variance.  Then $(c_i)_{i \in \N}$ is quantum unitarily invariant.
\end{prop}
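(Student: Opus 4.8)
The plan is to follow the proof of Proposition~\ref{semrot}, replacing $NC_2(2k)$ by the labelled families $NC_2^{\mathbf d}(2k)$ and the orthogonality relations of $A_o(n)$ by the unitarity relations that define $A_u(n)$. First I would observe that, exactly as there, invariance of $\varphi_x$ under $\beta_n$ amounts to a family of identities in $A_u(n)$; it is enough to check that for every $n$, every $1\le j_1,\dots,j_k\le n$ and every $d_1,\dots,d_k\in\{1,*\}$,
\begin{equation*}
\sum_{1\le i_1,\dots,i_k\le n}\varphi\bigl(c_{i_1}^{d_1}\dotsb c_{i_k}^{d_k}\bigr)\,v_{i_1j_1}^{d_1}\dotsb v_{i_kj_k}^{d_k}=\varphi\bigl(c_{j_1}^{d_1}\dotsb c_{j_k}^{d_k}\bigr)\,1_{A_u(n)},
\end{equation*}
a general monomial (some indices possibly exceeding $n$) being handled by the same bookkeeping, since the variables with large index are fixed by $\beta_n$ and contribute index-independent cumulant factors on both sides. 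Since $E$ preserves $\varphi$, I would replace $\varphi(c_{i_1}^{d_1}\dotsb c_{i_k}^{d_k})$ by $\varphi(E[c_{i_1}^{d_1}\dotsb c_{i_k}^{d_k}])$ and expand by the moment--cumulant formula together with the circular hypothesis, in the form of Proposition~\ref{semcircexp}(ii): the left-hand side vanishes identically when $k$ is odd, and when $k=2m$ it equals
\begin{equation*}
\sum_{1\le i_1,\dots,i_{2m}\le n}\ \sum_{\substack{\pi\in NC_2^{\mathbf d}(2m)\\ \pi\le\ker\mathbf i}}\varphi\bigl(\kappa_E^{(\pi)}[c_{i_1}^{d_1}\otimes\dotsb\otimes c_{i_{2m}}^{d_{2m}}]\bigr)\,v_{i_1j_1}^{d_1}\dotsb v_{i_{2m}j_{2m}}^{d_{2m}}.
\end{equation*}
Using the common variance, I would note that unwinding the recursive definition of $\kappa_E^{(\pi)}$ makes $\kappa_E^{(\pi)}[c_{i_1}^{d_1}\otimes\dotsb\otimes c_{i_{2m}}^{d_{2m}}]$ depend, for fixed $\pi$ and $\mathbf d$, only on $\pi$ whenever $\pi\le\ker\mathbf i$: each contraction of a two-element interval block, whose two labels are distinct, evaluates to $\eta(b)$ or $\theta(b)$ according to its orientation, with $b\in B$ already index-independent by induction. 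Writing $\kappa_E^{(\pi)}$ for this common value and interchanging the sums, the display becomes $\sum_{\pi\in NC_2^{\mathbf d}(2m)}\varphi(\kappa_E^{(\pi)})\sum_{\pi\le\ker\mathbf i}v_{i_1j_1}^{d_1}\dotsb v_{i_{2m}j_{2m}}^{d_{2m}}$.

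The heart of the matter is the identity, for $\pi\in NC_2^{\mathbf d}(2m)$,
\begin{equation*}
\sum_{\substack{1\le i_1,\dots,i_{2m}\le n\\ \pi\le\ker\mathbf i}}v_{i_1j_1}^{d_1}\dotsb v_{i_{2m}j_{2m}}^{d_{2m}}=\begin{cases}1_{A_u(n)},&\pi\le\ker\mathbf j\\ 0,&\text{otherwise,}\end{cases}
\end{equation*}
which I would prove by induction on $m$ in the style of Proposition~\ref{semrot}. For $m=1$ the block $\{1,2\}$ forces $i_1=i_2$, and the two possibilities $(d_1,d_2)=(*,1)$ and $(d_1,d_2)=(1,*)$ give $\sum_i v_{ij_1}^*v_{ij_2}=\delta_{j_1j_2}1$ and $\sum_i v_{ij_1}v_{ij_2}^*=\delta_{j_1j_2}1$ respectively, each being a defining relation of $A_u(n)$ (unitarity of $v$ and of $\bar v$). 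For $m>1$ I would pick a two-element interval block $V=\{p,p+1\}$ of $\pi$; since $\pi\in NC_2^{\mathbf d}$ we have $d_p\ne d_{p+1}$, so performing the sum over the forced index $i_p=i_{p+1}$ produces the scalar factor $\sum_i v_{ij_p}^{d_p}v_{ij_{p+1}}^{d_{p+1}}=\delta_{j_pj_{p+1}}1$, which pulls out of the product and reduces the claim to $\pi\setminus V\in NC_2^{\mathbf d'}(2m-2)$, where $\mathbf d'$ (and similarly $\mathbf j'$) is obtained by deleting the $p$-th and $(p+1)$-st entries; the induction hypothesis then finishes the step, using that $j_p=j_{p+1}$ together with $(\pi\setminus V)\le\ker\mathbf j'$ is equivalent to $\pi\le\ker\mathbf j$.

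Feeding this back in, the left-hand side of the desired identity collapses to $\sum_{\pi\in NC_2^{\mathbf d}(2m),\,\pi\le\ker\mathbf j}\varphi(\kappa_E^{(\pi)})\,1_{A_u(n)}$, which by the very same cumulant expansion applied to $\varphi(c_{j_1}^{d_1}\dotsb c_{j_{2m}}^{d_{2m}})$ is exactly $\varphi(c_{j_1}^{d_1}\dotsb c_{j_{2m}}^{d_{2m}})\,1_{A_u(n)}$; odd monomials give $0$ on both sides. Hence $(c_i)_{i\in\N}$ is quantum unitarily invariant. Compared with the semicircular case, the only genuinely new point is the bookkeeping of the $\{1,*\}$-labels: one must track the orientation of each pair block so that the contraction $\sum_i v_{ia}^{d}v_{ib}^{d'}$ with $d\ne d'$ is recognized as a defining relation of $A_u(n)$ in \emph{both} orientations --- this is where the argument uses that $A_u(n)$ imposes unitarity of $\bar v$ as well as of $v$ --- and one must check that ``common variance'' really does make the iterated cumulant $\kappa_E^{(\pi)}[c_{i_1}^{d_1}\otimes\dotsb\otimes c_{i_{2m}}^{d_{2m}}]$ independent of the admissible index tuple. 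I expect this label bookkeeping, rather than any new idea, to be the main (and fairly modest) obstacle.
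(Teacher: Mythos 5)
Your proposal is correct and follows essentially the same route as the paper: reduce to the identity in $A_u(n)$, expand via the moment--cumulant formula and the common-variance hypothesis, and prove the key collapse $\sum_{\pi\le\ker\mathbf i}v_{i_1j_1}^{d_1}\dotsb v_{i_{2m}j_{2m}}^{d_{2m}}=\delta_{\pi\le\ker\mathbf j}\,1$ by induction on interval pair blocks, exactly as in Proposition~\ref{semrot}. Your explicit tracking of the $\{1,*\}$-labels and the observation that the base case needs unitarity of $\bar v$ as well as of $v$ is a point the paper leaves implicit, but it is the same argument.
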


\begin{proof}
Let $1 \leq j_1,\dotsc,j_{2k} \leq n$ and $d_1,\dotsc,d_{2k} \in \{1,*\}$, then as in the proof of Proposition \ref{semrot} we have
\begin{multline*}
 \sum_{1 \leq i_1,\dotsc,i_{2k} \leq n} \varphi(c_{i_1}^{d_1}\dotsb c_{i_{2k}}^{d_k})v_{i_1j_1}\dotsb v_{i_{2k}j_{2k}} =\\ \sum_{\pi \in NC_2^{\mathbf d}(2k)} \varphi(\kappa_E^{(\pi)}[c_1^{d_1} \otimes \dotsb \otimes c_1^{d_{2k}}])\sum_{\substack{1 \leq i_1,\dotsc,i_{2k} \leq n\\ \pi \leq \ker \mathbf i}} v_{i_1j_1}^{d_1}\dotsb v_{i_{2k}j_{2k}}^{d_{2k}}.
\end{multline*}
An inductive argument similar to that given in Proposition \ref{semrot} shows that
\begin{equation*}
 \sum_{\substack{1 \leq i_1,\dotsc,i_{2k} \leq n\\ \pi \leq \ker \mathbf i}} v_{i_1j_1}^{d_1}\dotsb v_{i_{2k}j_{2k}}^{d_{2k}} = \begin{cases} 1_{A_u(n)}, & \pi \leq \ker \mathbf j\\ 0, & \text{otherwise}\end{cases}
\end{equation*}
for any $\pi \in NC_2^{\mathbf d}(2k)$.  It follows that that
\begin{align*}
 \sum_{1 \leq i_1,\dotsc,i_{2k} \leq n} \varphi(c_{i_1}^{d_1}\dotsb c_{i_{2k}}^{d_{2k}})v_{i_1j_1}\dotsb v_{i_{2k}j_{2k}} &= \sum_{\substack{\pi \in NC_2^{\mathbf d}(2k)\\ \pi \leq \ker \mathbf j}} \negthickspace \varphi(\kappa_E^{(\pi)}[c_1^{d_1} \otimes \dotsb \otimes c_1^{d_{2k}}])1_{A_u(n)}\\
&= \varphi(c_{j_1}^{d_1}\dotsb c_{j_{2k}}^{d_{2k}})1_{A_u(n)}.
\end{align*}
Since also
\begin{equation*}
 \sum_{1 \leq i_1,\dotsc,i_{2k+1} \leq n} \varphi(c_{i_1}^{d_1}\dotsb c_{i_{2k+1}}^{d_{2k+1}})v_{i_1j_1}\dotsb v_{i_{2k+1}j_{2k+1}} = 0 = \varphi(c_{j_1}^{d_1}\dotsb c_{j_{2k+1}}^{d_{2k+1}}) 1_{A_u(n)}
\end{equation*}
for any $1 \leq j_1,\dotsc,j_{2k+1} \leq n$ and $d_1,\dotsc,d_{2k+1} \in \{1,*\}$, it follows that $(c_i)_{i \in \N}$ is quantum unitarily invariant as claimed.
\end{proof}

\begin{rmk}
Throughout the rest of the section, $(M,\varphi)$ will be a W$^*$-probability space and $(x_i)_{i \in \N}$ a sequence in $M$.  As in the previous section, $M_\infty$ will denote the von Neumann algebra generated by $\{x_i:i \in \N\}$.  We denote
\begin{equation*}
 \eu{QU}_n = \mathrm{W}^*(\{q(x):q \in \ms Q_\infty^{\beta_n}\}),
\end{equation*}
where $\ms Q_\infty^{\beta_n}$ is the fixed point algebra of the coaction $\beta_n$.  We then set
\begin{equation*}
 \eu{QU} = \bigcap_{n \geq 1} \eu{QU}_n.
\end{equation*}

As in the orthogonal case, if $(x_i)_{i \in \N}$ is quantum unitarily invariant sequence then there is a right coaction $\widetilde \beta_n:M_\infty \to M_\infty \otimes \frk A_u(n)$ of the Hopf von Neumann algebra $\frk A_u(n)$ on $M_n$ which is determined by
\begin{equation*}
 \widetilde \alpha_n(q(x)) = (\mathrm{ev}_x \otimes \pi_{\psi_n}) \alpha_n(q)
\end{equation*}
for $q \in \ms Q_n$, and the fixed point algebra of this coaction is $\eu{QU}_n$.  There is then a $\varphi$-preserving conditional expectation $E_{\eu{QU}_n}$ of $M_\infty$ onto $\eu{QU}_n$ given by
\begin{equation*}
 E_{\eu{QU}_n}[m] = (\mathrm{id} \otimes \psi_n) \widetilde \alpha_n(m)
\end{equation*}
for $m \in M_\infty$.
\end{rmk}

\begin{rmk}
To prove Theorem \ref{infunit}, we will first need the following result.
\end{rmk}

\begin{lem}\label{unitcum}
Let $(x_i)_{i \in \N}$ be a quantum unitarily invariant sequence in $(M,\varphi)$.  Then for any $b_0,\dotsc,b_{2k} \in \eu{QU}_n$, $d_1,\dotsc,d_{2k} \in \{1,*\}$ and $\pi \in NC_2^\mathbf d(2k)$, we have
\begin{equation*}
 \kappa_{E_{\eu{QU}}}^{(\pi)}[b_0x_{1}^{d_1}b_1 \otimes \dotsb \otimes x_{1}^{d_{2k}}b_{2k}] = \lim_{n \to \infty} n^{-k} \sum_{\substack{1 \leq i_1,\dotsc,i_{2k} \leq n\\ \pi \leq \ker \mathbf i}} b_0x_{i_1}^{d_1}\dotsb x_{i_{2k}}^{d_{2k}}b_{2k},
\end{equation*}
with convergence in the strong topology.
\end{lem}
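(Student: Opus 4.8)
The plan is to transcribe the proof of Lemma~\ref{rotcum} to the quantum unitary setting, obtaining an exact cumulant formula for the $\eu{QU}_n$-valued conditional expectation, and then to pass to the limit $n \to \infty$ by means of Proposition~\ref{explim}.

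First I would prove, by induction on $k$, the finite-$n$ identity
\begin{equation*}
\kappa_{E_{\eu{QU}_n}}^{(\pi)}[b_0 x_1^{d_1}b_1 \otimes \dotsb \otimes x_1^{d_{2k}}b_{2k}] = n^{-k}\sum_{\substack{1 \leq i_1,\dotsc,i_{2k} \leq n\\ \pi \leq \ker \mathbf i}} b_0 x_{i_1}^{d_1}b_1 \dotsb x_{i_{2k}}^{d_{2k}}b_{2k},
\end{equation*}
for all $n$, all $b_0,\dotsc,b_{2k} \in \eu{QU}_n$, all $\mathbf d \in \{1,*\}^{2k}$, and all $\pi \in NC_2^{\mathbf d}(2k)$.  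For $k = 1$, a non-crossing pair partition of $\{1,2\}$ lies in $NC_2^{\mathbf d}(2)$ only when $d_1 \neq d_2$; in that case $NC_2^{\mathbf d}(2)$ is a single point, $G_{\mathbf d n}(\{\{1,2\}\},\{\{1,2\}\}) = n$, hence $W_{\mathbf d n}(\{\{1,2\}\},\{\{1,2\}\}) = n^{-1}$, and so $\psi_n(v_{i_1 1}^{d_1}v_{i_2 1}^{d_2}) = n^{-1}\delta_{i_1 i_2}$ by Remark~\ref{haar}(2).  Since $\psi_n$ annihilates words of odd length in the generators $v_{ij},v_{ij}^*$, we also have $E_{\eu{QU}_n}[b_0 x_1^{d_1}b_1] = \sum_i b_0 x_i^{d_1}b_1\,\psi_n(v_{i1}^{d_1}) = 0$, and therefore $\kappa_{E_{\eu{QU}_n}}^{(2)}[b_0 x_1^{d_1}b_1 \otimes x_1^{d_2}b_2] = E_{\eu{QU}_n}[b_0 x_1^{d_1}b_1 x_1^{d_2}b_2] = n^{-1}\sum_i b_0 x_i^{d_1}b_1 x_i^{d_2}b_2$.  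For $k > 1$ I would pick an interval block $V = \{l,l+1\}$ of $\pi$; since every block of a partition in $NC_2^{\mathbf d}(2k)$ contains one index with label $1$ and one with label $*$, automatically $d_l \neq d_{l+1}$ and $\pi \setminus V \in NC_2^{\mathbf d'}(2k-2)$, where $\mathbf d'$ is obtained from $\mathbf d$ by deleting the entries in positions $l$ and $l+1$.  The remainder of the induction is then word for word the computation in Lemma~\ref{rotcum}, with $u_{ij}$ replaced throughout by $v_{ij}^{d}$: one splits the sum over the forced equality $i_l = i_{l+1}$, identifies the resulting inner sum $n^{-1}\sum_i x_i^{d_l}b_l x_i^{d_{l+1}}$ with $\kappa_{E_{\eu{QU}_n}}^{(2)}[x_1^{d_l}b_l \otimes x_1^{d_{l+1}}]$ via the base case, and applies the inductive hypothesis to $\pi \setminus V$ together with the recursive definition of $\kappa^{(\pi)}$.

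Next I would fix $b_0,\dotsc,b_{2k} \in \eu{QU}$, so that the $b_i$ lie in $\eu{QU}_n$ for every $n$ and the identity above applies for all $n$.  By the M\"{o}bius-inversion form of the moment--cumulant formula (\cite{memoir}), $\kappa_{E_{\eu{QU}_n}}^{(\pi)}[b_0 x_1^{d_1}b_1 \otimes \dotsb \otimes x_1^{d_{2k}}b_{2k}]$ is a linear combination, with coefficients $\mu_{2k}(\sigma,\pi)$ independent of $n$, of the moment expressions $E_{\eu{QU}_n}^{(\sigma)}[b_0 x_1^{d_1}b_1 \otimes \dotsb \otimes x_1^{d_{2k}}b_{2k}]$ over $\sigma \in NC(2k)$ with $\sigma \leq \pi$; by Proposition~\ref{explim}(ii) each of these converges strongly, as $n \to \infty$, to $E_{\eu{QU}}^{(\sigma)}[b_0 x_1^{d_1}b_1 \otimes \dotsb \otimes x_1^{d_{2k}}b_{2k}]$.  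Hence $\kappa_{E_{\eu{QU}_n}}^{(\pi)}[b_0 x_1^{d_1}b_1 \otimes \dotsb \otimes x_1^{d_{2k}}b_{2k}]$ converges strongly to $\kappa_{E_{\eu{QU}}}^{(\pi)}[b_0 x_1^{d_1}b_1 \otimes \dotsb \otimes x_1^{d_{2k}}b_{2k}]$, and combining this with the finite-$n$ identity gives the lemma.

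The computation is essentially bookkeeping, and the one place where the quantum unitary case genuinely differs from the quantum orthogonal case is the presence of the $*$-pattern $\mathbf d$.  What requires care is that the defining condition of $NC_2^{\mathbf d}$ is exactly what forces the length-$2$ Weingarten entry to equal $n^{-1}$ rather than $0$, and that an interval block of a partition in $NC_2^{\mathbf d}(2k)$ automatically pairs an unstarred leg with a starred one, so that the inductive reduction never leaves the class $NC_2^{\mathbf d}$; with those observations in place, the induction and the limit argument run exactly as in the quantum orthogonal case.
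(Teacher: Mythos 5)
Your proof is correct and follows the same route as the paper's: the paper likewise reduces the lemma to the finite-$n$ identity $\kappa_{E_{\eu{QU}_n}}^{(\pi)}[b_0x_1^{d_1}b_1 \otimes \dotsb \otimes x_1^{d_{2k}}b_{2k}] = n^{-k}\sum_{\pi \leq \ker \mathbf i} b_0x_{i_1}^{d_1}\dotsb x_{i_{2k}}^{d_{2k}}b_{2k}$, proved by the induction of Lemma \ref{rotcum}, and then passes to the limit via Proposition \ref{explim}. Your write-up simply supplies the details the paper leaves to the reader, namely the $*$-pattern bookkeeping in the induction and the M\"obius-inversion step behind the strong convergence of the cumulant functions.
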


\begin{proof}
By Proposition \ref{explim}, we have
\begin{equation*}
 \kappa_{E_{\eu{QU}}}^{(\pi)}[b_0x_{1}^{d_1}b_1 \otimes \dotsb \otimes x_{1}^{d_{2k}}b_{2k}] = \lim_{n \to \infty} \kappa_{E_{\eu{QU}_n}}^{(\pi)}[b_0x_{1}^{d_1}b_1 \otimes \dotsb \otimes x_{1}^{d_{2k}}b_{2k}].
\end{equation*}
It therefore suffices to show that
\begin{equation*}
 \kappa_{E_{\eu{QU}_n}}^{(\pi)}[b_0x_{1}^{d_1}b_1 \otimes \dotsb \otimes x_{1}^{d_{2k}}b_{2k}] = n^{-k} \sum_{\substack{1 \leq i_1,\dotsc,i_{2k} \leq n\\ \pi \leq \ker \mathbf i}} b_0x_{i_1}^{d_1}\dotsb x_{i_{2k}}^{d_{2k}}b_{2k}.
\end{equation*}
This is proved by an inductive argument similar to that given for Lemma \ref{rotcum}.
\end{proof}

\begin{proof}[Proof of Theorem \ref{infunit}]
The implication (ii) $\Rightarrow$ (i) follows from Proposition \ref{circunit}.  Let $(x_i)_{i \in \N}$ be a quantum unitarily invariant sequence in the W$^*$-probability space $(M,\varphi)$.
Let $j_1,\dotsc,j_{2k} \in \N$, $b_0,\dotsc,b_{2k} \in \eu{QU}$, and $d_1,\dotsc,d_{2k} \in \{1,*\}$.  As in the proof of Theorem \ref{infrot}, we have
\begin{align*}
 E_{\eu{QU}}[b_0x_{j_1}^{d_1}\dotsb x_{j_{2k}}^{d_{2k}}b_{2k}] &= \lim_{n \to \infty} E_{\eu{QU}_n}[b_0x_{j_1}^{d_1}\dotsb x_{j_{2k}}^{d_{2k}}b_{2k}]\\
&= \lim_{n \to \infty}\sum_{\substack{\pi,\sigma \in NC_2^\mathbf d(2k)\\ \sigma \leq \ker \mathbf j}} W_{kn}(\pi,\sigma) \sum_{\substack{1 \leq i_1,\dotsc,i_{2k} \leq n\\ \pi \leq \ker \mathbf i}}b_0x_{i_1}^{d_1}\dotsb x_{i_{2k}}^{d_{2k}}b_{2k}\\
&= \lim_{n \to \infty} \sum_{ \substack{\pi \in NC_2^\mathbf d(2k)\\ \pi \leq \ker \mathbf j}} n^{-k}\sum_{\substack{1 \leq i_1,\dotsc,i_{2k} \leq n\\ \pi \leq \ker \mathbf i}} b_0x_{i_1}^{d_1}\dotsb x_{i_{2k}}^{d_{2k}}b_{2k}.
\end{align*}
Applying Lemma \ref{unitcum}, we have
\begin{equation*} 
 E_{\eu{QU}}[b_0x_{j_1}^{d_1}\dotsb x_{j_{2k}}^{d_{2k}}b_{2k}] = \sum_{ \substack{\pi \in NC_2^{\mathbf d}(2k)\\ \pi \leq \ker \mathbf j}} \kappa_{E_{\eu{QU}}}^{(\pi)}[b_0x_{1}^{d_1}b_1 \otimes \dotsb \otimes x_{1}^{d_{2k}}b_{2k}].
\end{equation*}
It is easy to see that the odd moments are zero, and the result then follows from Proposition \ref{semcircexp}.
\end{proof}

\begin{rmk}
Using the approach in Section 3, one may obtain an approximation result for finite quantum unitarily invariant sequences similar to Theorem \ref{finrot}.  The details are left to the reader.
\end{rmk}

\section*{Acknowledgement}

I would like to thank Dan-Virgil Voiculescu for his continued guidance and support while working on this project.

\bibliographystyle{hsiam}
\bibliography{ref}

\end{document}